\newtheorem{as}{Assumption}[section]
\newtheorem{tm}{Theorem}[section]
\newtheorem{rk}{Remark}[section]
\newtheorem{prop}{Proposition}[section]
\newtheorem{lm}{Lemma}[section]
\newcommand{\E}{\mathbb E}
\newcommand{\PP}{\mathbb P}
\newcommand{\N}{\mathbb N}
\newcommand{\R}{\mathbb R}
\newcommand{\LL}{\mathcal L}
\newcommand{\OO}{\mathcal O}
\newcommand{\HH}{\mathbb H}
\newcommand{\FFF}{\mathscr F}
\newcommand{\<}{\langle}
\renewcommand{\>}{\rangle}
\newcommand{\TheTitle}{Strong and weak convergence rates of a spatial approximation for stochastic partial differential equation with one-sided Lipschitz coefficient}
\newcommand{\TheAuthors}{Jianbo Cui and  Jialin Hong}
\title{{\TheTitle}\thanks{Submitted to the editors in DATE.
\funding{This work was supported by National Natural Science Foundation of China (No. 91630312, No. 91530118, No.11021101  and No. 11290142).}}}
\author{
Jianbo Cui 
\thanks{1. LSEC, ICMSEC, 
Academy of Mathematics and Systems Science, Chinese Academy of Sciences, Beijing,  100190, China\qquad
2. School of Mathematical Science, University of Chinese Academy of Sciences, Beijing, 100049, China 
(\email{jianbocui@lsec.cc.ac.cn}(corresponding author), \email{hjl@lsec.cc.ac.cn})}
\and 
Jialin Hong \footnotemark[2]
}
\begin{document}

\maketitle

\begin{abstract}
Strong and weak approximation errors of a spatial finite element method are analyzed for  stochastic partial 
differential equations(SPDEs) with one-sided Lipschitz coefficients, including the stochastic Allen--Cahn equation, driven by additive noise.
In order to give the strong convergence rate of the  finite element method,
we present an appropriate  decomposition
and some a priori estimates of the discrete stochastic convolution. To the best of our knowledge, there has been no essentially sharp weak convergence rate of spatial approximation for parabolic SPDEs
with non-globally Lipschitz coefficients. 
To investigate the weak error,
we first regularize the original equation by the splitting technique and derive the 
regularity estimates of the corresponding regularized Kolmogorov equation.
Meanwhile, we present the regularity estimate in Malliavin sense and the refined estimate of the  finite element method. 
Combining the 
regularity estimates of regularized Kolmogorov equation  with Malliavin integration by parts formula, the weak convergence rate  is shown to be twice the strong convergence rate. 
\end{abstract}

\begin{keywords}one-sided Lipschitz coefficient, stochastic Allen--Cahn equation, finite element method, strong and weak convergence rate, Kolmogorov equation, Malliavin calculus
\end{keywords}

\begin{AMS}
{60H35}, {60H15, 65M15.}
\end{AMS}

\section{Introduction}
Both strong and  weak convergence rates of  
numerical approximations for SPDEs with globally Lipschitz continuous  and regular nonlinearities  have 
been studied over past decades. 
In contrast to the Lipschitz coefficient case,  strong and weak convergence rates of numerical approximations for 
SPDEs with non-globally Lipschitz continuous nonlinearities, especially the stochastic Allen--Cahn equation, 
become more involved recently (see, e.g., \cite{AC17,BGJK17,BJ16,BCH18,BG18, CHL16b,KLL18, LQ18, MP17,QW18, Wang18}) and are far from well-understood. 
 We refer to \cite{BGJK17,BJ16,BCH18,LQ18, MP17,QW18, Wang18} and references therein
for the strong convergence rate results of many different temporal and spatial approximations, and to \cite{BG18b,CH17} for the weak convergence rate results of temporal splitting type schemes. 
Up to now, there has been no essentially sharp weak convergence rate result of spatial approximation for parabolic SPDEs
with non-globally Lipschitz coefficients.
The present work makes  further contributions on the strong and weak convergence rates of spatial approximations for SPDEs with non-globally Lipschitz continuous nonlinearities but one-sided Lipschitz nonlinearities driven 
by additive noise.

Let $\OO=[0,L]$ and $\HH=L^2(\OO)$ be the real separable Hilbert space endowed with the usual inner product.  In this article, we mainly focus on the following semilinear parabolic SPDE,
\begin{align}\label{spde}
dX(t)+AX(t)dt&=F(X(t))dt+dW(t),\quad t\in [0,T],\\\nonumber
X(0)&=X_0,
\end{align}
where $0<T<\infty$, 
$-A$ is the Laplacian operator on  $\mathcal O$ under homogenous Dirichlet boundary condition, $F$ is the Nemytskii operator defined by $F(X)(\xi):=f(X(\xi)), \xi \in \OO$, where 
$f$ is a real-valued nonlinear function and satisfies Assumption \ref{as-dri}. In particular, Eq. \eqref{spde} is the stochastic Allen--Cahn equation if $F(X)=X-X^3$.
The stochastic process $\{W(t)\}_{t\ge 0}$ is a generalized $Q$-Wiener process on a filtered probability space $(\Omega,\mathcal F,\{\mathcal F_t\}_{t\ge 0},\PP) $. Under further assumptions on $X_0$, $Q$, $f$ and $\|A^{\frac {\beta -1}2}\|_{\LL_2^0}<\infty$,  $\beta\in (0,1]$,
similar arguments in \cite{BCH18,QW18} yield that
there is a 
unique mild solution $X$ of Eq. \eqref{spde}, which 
possesses the optimal spatial regularity $\E \Big[\|X(t)\|_{\HH^{\beta}}^p\Big]\le C(T,Q,X_0,p)$, $p\ge1$.  For the numerical study of SPDE with one-sided Lipschitz coefficient driven by the multiplicative noise under enough spatial regularity assumptions, we refer to \cite{FLZ17, MP17}. In this work, we do not consider the case of the multiplicative noise with low spatial regularity assumption, since it is more involved and beyond the scope of this article. 

One main contribution  of this article is applying the 
variational approach, combined with an appropriate error decomposition, to deduce 
the strong convergence rate of the spatial finite element method
for Eq. \eqref{spde} with one-sided Lipschitz coefficients under the mild assumption on $X_0$.
The corresponding finite element approximation $X^h$
satisfies  
\begin{align}\label{sge}
dX^h(t)+A_hX^h(t)dt&=P^hF(X^h(t))dt+P^hdW(t),\\\nonumber
X(0)&=X_0^h,
\end{align}
where $P^h$ is the  Galerkin finite element projection and $A_h$ is the discretization of $A$.
Recently, the authors in \cite{FLZ17}  prove strong convergence with sharp rates of the finite element method for stochastic Allen--Cahn equation with gradient-type multiplicative noise.
The authors in \cite{QW18} deduce the optimal strong convergence rate of the finite element method for stochastic Allen--Cahn equation driven by additive trace-class noise.
To the best of our knowledge, there exists no sharp strong convergence rate result of the finite element method approximating Eq. \eqref{spde} driven by general additive noise.
As the considered noise in Eq. \eqref{spde} could be very rough, a priori estimate of stochastic convolution is needed. We make use of the properties of 
$S^h$ and $P^h$ to get the non-uniform  estimate of the approximated stochastic convolution $Z^h$, and obtain the sharp strong convergence rate, for $X_0\in \mathcal C(\OO)$, $T>0$, $p\ge 1$,
\begin{align*}
\E\Big[\|X(T)-X^h(T)\|_{\HH}^p\Big]&\le C(X_0,T,p,\gamma)(1+T^{-\frac \gamma 2})^{p}h^{\gamma p},
\end{align*}
where $\gamma\le \beta$, if $\beta\in (\frac 12,1]$ and $\gamma <\beta $, if $\beta\in (0, \frac 12]$.
We remark that this approach to deduce the strong convergence rate of the numerical approximation is also available for more general case (see
Remark \ref{str-rk}).

Another main contribution is about the weak convergence rate of the finite element method 
for Eq. \eqref{spde} with one-sided Lipschitz coefficient.
In recent years, there already exist many different strategies on the weak error analysis for many different numerical schemes approximating  parabolic SPDEs with Lipschitz coefficients. We refer to e.g. \cite{AL16, bd18,Deb11,KLL12,KLL13} for the error
analysis  based on the associated Kolmogorov equation, to e.g. \cite{CJK14, HJK16} for applying the mild It\^o formula approach  and to e.g. \cite{AKL16, Wang16}  for other techniques. However,
no essentially sharp weak convergence rate of spatial approximation is established for parabolic SPDEs
with non-globally Lipschitz coefficients. 
There are three key points  to deduce the weak convergence rate of numerical approximations for Eq. \eqref{spde} with non-sided Lipschitz coefficients:  to give the regularity estimates of the corresponding Kolmogorov equation, to deduce the uniform estimate of the spatial approximation and  to get rid of the irregular terms in the weak error estimate.
Inspired by \cite{BG18b} where the authors shows the weak convergence order of the two temporal splitting type schemes approximating stochastic Allen--Cahn equation driven by space-time white noise,
we propose a regularizing procedure through a splitting strategy. 
Then we utilize the properties of $S^h$, $P^h$
and $A_h$ (see Section \ref{sec-2}), as well as the non-uniform estimate of the approximated stochastic convolution $Z^h$ to get a priori 
estimate of the finite element approximation.
At last, by using  the Malliavin integration by parts, together with 
the regularity estimates of the regularized Kolmogorov equation and a priori 
estimate of the finite element approximation, we derive 
the essentially sharp weak convergence rate result of $X^h$, for 
$\phi \in \mathcal C^2_b(\HH)$, $X_0\in \mathcal C(\OO)$, $T>0$, $\gamma<\beta$,
\begin{align*}
\Big|\E\Big[\phi(X(T))-\phi(X^h(T))\Big]\Big|
\le C(X_0,T,\gamma,\phi)(1+T^{-\gamma})h^{2\gamma}.
\end{align*}

The outline of this paper is as follows. In the next section,
some preliminaries are listed.
Section \ref{sec-str} is devoted to giving the  a priori estimates of Eq. \eqref{spde}, the strong convergence rate 
of the finite element method, as well as the a priori  estimates of the  finite element method and semi-discretized stochastic convolution.
In Section \ref{sec-wea}, we propose a new regularizing procedure
and give an approach to study the weak convergence rate of the finite element method by Malliavin calculus.

\section{Preliminaries}\label{sec-2}

In this section, we give  assumptions 
on $A$, $F$ and $W(t)$,  the abstract functional analytical framework of the considered equation and finite element method, and a brief introduction to Malliavin calculus.

Given two separable Hilbert spaces $(\mathcal H, \|\cdot \|_{\mathcal H})$ and $(\widetilde  H,\|\cdot \|_{\widetilde H})$, 
denote by  $\LL(\mathcal H, \widetilde H)$ and $\LL_1(\mathcal H, \widetilde H)$ the Banach spaces of all linear bounded operators 
and  the nuclear operators from $\mathcal H$ to $\widetilde H$, respectively. 
The trace of an operator $\mathcal T\in \LL_1(\mathcal H)$
is $\text{tr}[\mathcal T]=\sum_{k\in \N^+}\<\mathcal Tf_k,f_k\>_{\mathcal H}$, where $\{f_k\}_{k\in \N^+}$ ($\N^+=\{1,2,\cdots\}$) is any orthonormal basis of $\mathcal H$.
In particular, if $\mathcal T\ge 0$, $\text{tr}[\mathcal T]=\|\mathcal T\|_{\mathcal L_1}$.
Denote by $\LL_2(\mathcal H,\widetilde H)$ the space 
of Hilbert--Schmidt operators from $\mathcal H$ into $\widetilde H$, equipped with the usual norm given by  $\|\cdot\|_{\LL_2(\mathcal H,\widetilde H)}=(\sum_{k\in \N^+}\|\cdot f_k\|^2_{\widetilde H})^{\frac{1}{2}}$.
The following useful property and inequality (see e.g. \cite{AL16}) hold 
\begin{align}\label{tr}
\<\mathcal T,\mathcal S\>_{\mathcal L_2(\mathcal H,\widetilde H)}=\text{tr}[\mathcal T^*\mathcal S]=\text{tr}[\mathcal S\mathcal T^*],\quad \mathcal T, \;\;\mathcal S\in \mathcal L_2(\mathcal H,\widetilde H),\\\nonumber
|\text{tr}[\mathcal S\mathcal T^*]|\le \|\mathcal S\mathcal T^*\|_{\mathcal L_1}\le \|\mathcal S\|\|\mathcal T\|_{\mathcal L_1},\quad \mathcal S\in \LL(\mathcal H,\widetilde H), \;\; \mathcal T\in \LL_1(\mathcal H,\widetilde H),
\end{align}
where $\mathcal T^*$ is the adjoint operator of $\mathcal T$.
Let
$\mathcal C_b^k(\mathcal H,\widetilde H)$, $k\ \in\N^+$ be the space of $k$ times continuous  differentiable mappings from $\mathcal H$ to $\widetilde H$ with uniformly bounded Fr\'echet  derivatives up to order $k$. We endow $\mathcal C_b^k(\mathcal H,\widetilde H)$ with the seminorm $|\cdot|_{\mathcal C_b^k(\mathcal H,\widetilde H)}$, defined as for $g\in \mathcal C_b^k(\mathcal H,\widetilde H)$, $|g|_{\mathcal C_b^k(\mathcal H,\widetilde H)}$ is the smallest constant $C$ such that 
\begin{align*}
\sup_{x\in \mathcal H}\|D^m g(x)\cdot (\phi_1,\cdots, \phi_m)\|_{\widetilde H}
\le C\|\phi_1\|_{\mathcal H}\cdots\|\phi_m\|_{\mathcal H}, \; \forall \phi_1,\cdots\in \mathcal H, \phi_m \in \mathcal H, m\le k.
\end{align*}

Given a Banach space $(\mathcal E,\|\cdot\|_{\mathcal E})$, we denote by $\gamma( \mathcal H, \mathcal E)$ the space of $\gamma$-radonifying operators endowed with the norm defined by
$\|\mathcal T\|_{\gamma(\mathcal  H, \mathcal E)}=(\widetilde \E\|\sum_{k\in\N^+}\gamma_k \mathcal Tf_k \|^2_{\mathcal E})^{\frac 12}$,
where $\{\gamma_k\}_{k\in\N^+}$ is a Rademacher sequence on a
probability space $(\widetilde \Omega,\widetilde \FFF, \widetilde \PP)$.
For convenience, we denote  $\|\cdot\|=\|\cdot\|_{\HH}$ and $\<\cdot,\cdot\>=\<\cdot,\cdot\>_{\HH}$. Let  $L^q=L^q(\OO)$, $1\le q<\infty$ and 
$ E=\mathcal C(\OO)$ equipped with the usual  norms.
We also need the following Burkerholder inequality  in martingale-type 2 Banach spaces (see, e.g., \cite[Lemma 2.1]{VVW08}), for $L^q, q\in[2,\infty)$ and $p\ge 1$, there exists $C_{p,q}\in(0,\infty)>0$ such that
\begin{align}\label{Burk}
\Big\|\sup_{t\in [0,T]}\Big\|\int_0^t \phi(r)d\widetilde W(r)\Big\|_{L^q}\Big\|_{L^p(\Omega)}
&\le 
C_{p,q}\|\phi\|_{L^p(\Omega;L^2([0,T]; \gamma(\HH;L^q)))}\\\nonumber 
&\le  C_{p,q}\Big(\E\Big(\int_0^T\Big\|\sum_{k\in \N^+} (\phi(t) e_k)^2\Big\|_{L^{\frac q2}}dt\Big)^{\frac p2}\Big)^{\frac 1p}, 
 \end{align}
 where $\{\widetilde W(t)\}_{t\ge0}$ is the $\HH$-valued cylindrical 
 Wiener process and $\{e_k\}_{k\in \N^+}$ is an orthonormal basis of $\HH$.

\subsection{Main assumptions}
In this subsection, we introduce some useful notations
and  our main assumptions on $A$, $F$ and $W$.
Throughout this article, the initial datum $X_0$ is assumed to
 be a deterministic function and belongs to $E$ for convenience. We use $C$ to denote a generic constant, independent of  $h$, which differs from one place to another.

\begin{as}\label{as-lap}
Let $\OO=(0,L), L>0$ and $-A: D(A)\subset \HH \to \HH$ be the Laplacian operator under the homogeneous Dirichlet boundary condition, i.e., $Au=-\Delta u, u\in D(A)$.
\end{as}

Such assumption implies that
$-A$  generates 
an analytic and contraction $C_0$-semigroup $S(t), t\ge 0$ in $\HH$ and $L^q$.
It is also well known that the assumption on $\OO$ implies that the existence of the eigensystem $\{\lambda_k, e_k\}_{k\in\N^+}$ of 
$\HH$, such that $\lambda_k>0$, $Ae_k=\lambda_k e_k$ and $\lim\limits_{k\to \infty}\lambda_k=\infty$.
Let
 $\mathbb H^r$ be the Hilbert space equipped with the norm
 $\|\cdot \|_{\HH^r}:= \|A^{\frac {r}2}\cdot \|_{\HH}$ 
for  the fractional power $A^{\frac{r}{2}}, r\ge 0$.

\begin{as}\label{as-noi}
Let $W(t)$ be a Wiener process with covariance operator $Q$, where $Q$ is a bounded, linear, self-adjoint and positive definite operator on $\HH$. Assume that 
$\|A^{\frac {\beta-1}2 }\|_{\LL_2^0}<\infty$ with $0< \beta\le 1$,  where $\LL_2^0=\LL_2(U_0,\HH)$, $U_0=Q^{\frac 12}(\HH)$. In the case that $\beta\le \frac 12$, in addition assume that  $Q$ commutes with $A$.
\end{as}

\begin{as}\label{as-dri}
Let $K\in \N^+$ and $L_f>0$. Assume that $f:\R\to \R$ satisfies 
\begin{align*}
|f(\xi)|\le L_f(1+|\xi|^K), \quad f'(\xi)\le L_f, \quad |f'(\xi)|
\le L_f(1+|\xi|^{K-1}).
\end{align*} 
Let $F: L^{2K} \to \HH$ be the Nemytskii operator defined by
$F(X)(\xi)=f(X(\xi))$.

\end{as}

The above assumption ensures that $F: L^{2K}\to \HH$ satisfies for some constant $L=L(L_f,K)$, 
\begin{align*}
\<F(u)-F(v),u-v\>&\le L_f \|u-v\|^2,\; u,v \in L^{2K},\\
\|F(u)-F(v)\|&\le L(1+\|u\|_{E}^{K-1}+\|v\|_{E}^{K-1})\|u-v\|,\; u,v\in E, 
\end{align*}
where
$\|\cdot\|_{ E}$ is the supremum norm.
We remark that in the analysis of strong convergence rates, the assumption about the upper bound of the derivative of $f$ could be weakened to the monotone condition.
We also point out that  when studying the weak convergence rates,  more restricted condition on $F$ is needed.
The typical example of $f$ is a cubic polynomial
\begin{align*}
f(\xi)=a_3\xi^3+a_2\xi^2+a_1\xi+a_0,\;\; a_3<0, \;a_2,\;a_1,\;a_0\in \R.
\end{align*}
In this case, Eq. \eqref{spde} corresponds to  the  stochastic Allen--Cahn equation. 
We  remark that Assumptions \ref{as-lap}-\ref{as-dri} could be extend to $d\le 3$ and more general noise case (see Remark \ref{str-rk} in Section \ref{sec-str}). We also mention that 
the weak convergence rate of a full discretization of Eq. \eqref{spde} will be studied in \cite{CHS18}.

\subsection{Finite element method}
Let $(T_h)_{h\in (0,1)}$  be a quasi-uniform family
of  triangulations of $\OO$, i.e.,  $T_h$ is a partition of $\OO$, the parameter $h$ is the mesh size of $T_h$,  and the length of  each subinterval  
is bounded below by $ch$ for a constant $c>0$.
Let $(V_h)_{h\in(0,1)}$ be a family of spaces of 
continuous piecewise linear functions corresponding to $(T_h)_{h\in (0,1)}$, and $N_h$ be the dimension of $V_h$.
Denote $P^h:\HH\to V_h$ the orthogonal projection and $A_h: V_h\to V_h$ the discrete Laplacian satisfying $\<A_hu,v\>=\<\nabla u,\nabla v\>, u,v\in V_h$.
It is well known that the semi-discretization  $-A_hu^h=P^hf$ is finite element approximation of $-Au=f$
and that $\|u-u^h\|=\|A_h^{-1}P^hf-A^{-1}f\|\le Ch^2\|f\|$ (see e.g. \cite{Tho06}).
The operator $-A_h$ generates an analytic semigroup $(S^h(t))_{t\ge 0}$. In particular, there is an orthonormal  
eigenbasis  $\{e_i^h\}_{i=1}^{N_h}$ in $V^h$ equipped with the $\HH$ norm,  with eigenvalues $0<\lambda_1^h\le \lambda_2^h\le \cdots\le \lambda_{N_h}^h$ such that 
\begin{align*}
S^h(t)v^h=\sum_{i=1}^{N_h}e^{-\lambda_i^ht}\<v^h,e_i^h\>e_i^h, \quad v^h \in V_h, \;\; t\ge 0.
\end{align*}
We will often use the equivalence of the following two norms for $v^h\in V_h$, $\gamma \in [-\frac 12, \frac 12]$,
\begin{align}\label{eq-norm}
c\|A_h^{\gamma} v^h\|\le \|A^{\gamma} v^h\|\le C\|A_h^{\gamma} v^h\|,
\end{align}
the interpolation space $(\HH^{\beta}_h)_{\beta \in [-1,1]}$
and the properties of the Ritz projection $R^h: \HH^1 \to V_h$ and $P^h$ (see, e.g., \cite{AL16}, \cite[Chapter 3]{Tho06}),
\begin{align}\label{err-ord}
\|A^{\frac s2}(I-R^h)A^{-\frac r2}\|_{\LL(\HH)}&\le Ch^{r-s}, \quad 
0\le s\le 1\le r\le 2,\\\nonumber 
\|A^{\frac s2}(I-P^h)A^{-\frac r2}\|_{\LL(\HH)}&\le Ch^{r-s}, \quad 
0\le s\le 1, 0\le s\le r\le 2.
\end{align}

In the setting of strong convergence rate result,
we will need the error of the semigroups 
 $G^h(t):=S^h(t)P^h-S(t)$, $t\ge 0$, (see, e.g., \cite[Section 3]{QW18} or \cite[Chapter 3]{Tho06}) for $h\in (0,1]$,
\begin{align}\label{ord-str}
\|G^h(t)x\|&\le Ch^{u}t^{-\frac {u-v}2}\|x\|_{\HH^{v}},\; x\in\HH^v ,\; t>0, \; 0\le v\le u\le 2,\\\nonumber
\|G^h(t)x\|&\le Ct^{\frac \rho 2}\|x\|_{\HH^{-\rho}},\; x\in \HH^{-\rho}, t>0,\; 0\le \rho \le 1,\\\nonumber
\|G^h(t)x\|&\le Ct^{-1}h^{2-\rho}\|x\|_{\HH^{-\rho}},\; x\in \HH^{-\rho}, t>0,\; 0\le \rho\le 1,\\\nonumber
\|\int_0^tG^h(s)xds\|&\le Ch^{2-\rho}\|x\|_{\HH^{-\rho}},\; x\in \HH^{-\rho}
,\; t>0,\; 0\le \rho\le 1,
\\\nonumber
\Big(\int_0^t\|G^h(s)x\|^2ds\Big)^{\frac 12}
&\le Ch^{1+\rho}\|x\|_{\HH^\rho}, \; x\in \HH^{\rho},\;t>0,\; 0\le \rho\le 1.
\end{align}

Besides the above properties 
of finite element methods, the other important parts for our analysis are the smoothing effect of $S^h$ (see, e.g., \cite[Chapter 3]{Tho06})
\begin{align}\label{smo-ah}
\|A_h^{\gamma}S^h(t)P_h\|_{\LL(\HH)}&\le C_{\gamma}t^{-\gamma}, \;\;\gamma \ge 0,\; t >0,\\\nonumber
\int_0^t\|A_h^{\frac 12}S^hP_hx\|^2ds&\le C\|x\|^2,\;  x\in \HH,
\end{align}
and the boundedness of $P^h$ (see, e.g., \cite[Lemma 2.3]{TW83})
\begin{align*}
\|P^h\|_{\mathcal L(L^p)}\le C,\;\; 1\le p<\infty,\quad  \|P^h\|_{\mathcal L(E)}\le C.
\end{align*}
\subsection{Malliavin calculus}
In order to get the weak convergence rate, we  recall some preliminary about  Malliavin calculus in  Hilbert space (see e.g., \cite[Section 2]{AL16}), which will be used to deal with the singular term appeared in the weak error.
Since $Q$ is a bounded, linear, self-adjoint and positive definite operator on $\HH$, the corresponding Cameron-Martin space is $U_0=Q^{\frac 12}(\HH)$. Let $\mathcal  I: L^2([0,T]; U_0) \to L^2(\Omega)$ be an isonormal process, i.e., for any $\psi \in L^2([0,T]; U_0)$, $\mathcal I(\psi)$ is the centered Gaussian variable and $\E[\mathcal I(\psi_1)\mathcal I(\psi_2)]=\<\psi_1,\psi_2\>_{L^2([0,T]; U_0) }$, $\psi_1,\psi_2\in L^2([0,T]; U_0)$.
Let $\mathcal C_p^{\infty}(\R^N)$ be the space of all real-valued $\mathcal C^{\infty}$ functions on $\R^N$ with polynomial growth.
We denote the  family of smooth  real-valued cylindrical
random variables by
\begin{align*}
\mathcal S=\Big\{\mathcal X=g(\mathcal I(\psi_1),\cdots,\mathcal I(\psi_N)): g\in \mathcal C_p^{\infty}(\R^N), \psi_j \in L^2([0,T]; U_0), j=1,\cdots,N \Big\},
\end{align*}
and the  family of smooth  cylindrical $\HH$-valued 
random variables by
\begin{align*}
\mathcal S(\mathbb H)=\Big\{G=\sum_{i=1}^M\mathcal X_i\otimes h_i: \mathcal X_i \in \mathcal S, h_i\in \HH, M\ge 1\Big\}.
\end{align*} 
Then the Malliavin derivative of $G=\sum_{i= 1}^Mg_i(\mathcal I(\psi_1),\cdots,\mathcal I(\psi_N)\otimes h_i$ is defined by 
\begin{align*}
\mathcal D_sG=\sum_{i= 1}^M\sum_{j=1}^N\partial_j g_i(\mathcal I(\psi_1),\cdots,\mathcal I(\psi_N))\otimes (h_i\otimes \psi_j(s)).
\end{align*}
Since the derivative operator $\mathcal D$ is   closable (see, e.g., \cite[section 2]{AL16}), we denote $\mathbb D^{1,2}(\HH)$  the closure of $\mathcal S(\HH)$ with respect to Malliavin derivative equipped with the norm 
\begin{align*}
\|G\|_{\mathbb D^{1,2}(\HH)}=\Big(\E[\|G\|^2]+\E[\int_0^T\|\mathcal D_sG\|^2ds]\Big)^{\frac 12},
\end{align*}
where $\mathcal D_sG$ is the Malliavin derivative of $G$.
The key in the analysis of weak convergence rate is the following integration by parts formula(see, e.g., \cite[Section 2]{Deb11}).
For any random variable $G\in \mathbb D^{1,2}(\HH)$
and any predictable  process $\Theta \in L^2([0,T];\mathcal L_2^0)$, we have 
\begin{align}\label{int-by}
\E\Big[\Big\<\int_0^T\Theta (t)dW(t),G\Big\>\Big]
=\E\Big[ \int_0^T \Big\<\Theta (t),\mathcal D_tG\Big\>_{\mathcal L_2^0}dt\Big].
\end{align} 
Moreover, we also need the chain rule of the Malliavin derivative.
Let $\mathcal V$ be another separable Hilbert space and $\sigma \in \mathcal C_b^1(\HH,\mathcal V)$.
Then we have $\sigma(G)\in \mathbb D^{1,2}(\mathcal V)$,
\begin{align*}
\mathcal D_t^y(\sigma(G))&=\mathcal D\sigma(G)\cdot\mathcal D_t^yG, \quad y\in U_0, \quad G\in \mathbb D^{1,2}(\HH),\\
\mathcal D_t(\sigma(G))&=\mathcal D\sigma(G)\mathcal D_tG, \quad G\in \mathbb D^{1,2}(\HH),
\end{align*} 
where $\mathcal D_t^yG:=\mathcal D_tG y$ is the derivative of  $G$ in the direction of $y\in U_0$.

\section{A priori estimate and strong convergence rate}
\label{sec-str}

In this section, we present the strong convergence rate of  the finite element method, as well as the a priori estimate of the discrete stochastic convolution and  the finite element method.

\subsection{A priori estimate}
Combining the equivalence of Eq. \eqref{spde} and the 
following random PDE 
\begin{align*}
dY+AYdt&=F(Y+Z)dt,\;\; Y(0)=X_0,\\
dZ+AZdt&=dW(t),\;\; Z(0)=0,
\end{align*}
with the similar arguments in the proofs of 
\cite[Theorem 7.7]{DZ14} and \cite[Lemma 3.3]{BCH18}, we  
get the following a priori estimate on the exact solution 
of  Eq. \eqref{spde}.

\begin{lm}\label{exa}
Under Assumptions \ref{as-lap}-\ref{as-dri}, there exists a unique mild solution $X$ of Eq. \eqref{spde}.
Moreover,  for $t\in (0,T]$, $p\ge 1$, there exists a
constant $C(T,p)>0$ such that 
\begin{align*}
\sup_{s\in[0,t]}\E\Big[\|X(s)\|_{E}^p\Big]&\le C(T,p)(1+\|X_0\|_E^p),
\\
\E\Big[\|X(t)\|_{\HH^{\beta}}^p\Big]&\le (1+t^{-\frac {\beta p} 2})C(T,p)(1+\|X_0\|^p).
\end{align*}
\end{lm}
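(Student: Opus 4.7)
The plan is to follow the splitting strategy already flagged in the paragraph preceding the statement: write $X = Y + Z$, where $Z$ is the stochastic convolution and $Y$ solves a random PDE with a pathwise one-sided Lipschitz nonlinearity. All probabilistic difficulty is then concentrated in $Z$, and all nonlinear difficulty in the deterministic equation for $Y$.

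\emph{Step 1: Regularity of the stochastic convolution.} First I would control $Z(t)=\int_0^t S(t-s)\,dW(s)$. Using the factorization method together with the assumption $\|A^{(\beta-1)/2}\|_{\LL_2^0}<\infty$ (and, when $\beta\le 1/2$, the commutation of $Q$ with $A$, so that $A^{\alpha}$ can be moved through the semigroup), a standard Burkholder/It\^o isometry estimate yields
\begin{align*}
\E\bigl[\|Z(t)\|_{\HH^{\beta}}^{p}\bigr] \le C(T,p),\qquad \E\Bigl[\sup_{s\in[0,T]}\|Z(s)\|_{E}^{p}\Bigr]\le C(T,p),
\end{align*}
where the second bound uses Sobolev embedding in $d=1$ together with the H\"older continuity in time provided by the factorization representation (as in \cite{DZ14}).

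\emph{Step 2: Pathwise bound on $Y$.} Next I would analyse the random PDE
\begin{align*}
\partial_t Y + A Y = F(Y+Z),\qquad Y(0)=X_0.
\end{align*}
Existence and uniqueness follow from classical monotone operator theory since $F$ is one-sided Lipschitz. The key pathwise $E$-norm estimate is obtained by representing $Y$ via the semigroup, $Y(t)=S(t)X_0+\int_0^t S(t-s)F(Y(s)+Z(s))\,ds$, and using that $S(t)$ is a contraction on $E$ together with the dissipative structure of $f$ (leading coefficient negative in the cubic case, or equivalently $f'\le L_f$ plus polynomial growth). The standard trick, mimicking \cite[Thm.~7.7]{DZ14} and \cite[Lem.~3.3]{BCH18}, is to test the equation for $Y$ against $|Y|^{2K-2}Y$ (or argue via a comparison principle after subtracting a barrier depending on $\sup_{s\le t}\|Z(s)\|_E$) to obtain
\begin{align*}
\|Y(t)\|_{E} \le C(T)\bigl(1+\|X_0\|_{E}+\sup_{s\in[0,t]}\|Z(s)\|_{E}^{K}\bigr).
\end{align*}
Taking $p$-th moments and invoking Step 1 gives the first inequality for $X=Y+Z$.

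\emph{Step 3: The $\HH^{\beta}$ bound.} Using the mild form $X(t)=S(t)X_0+\int_0^t S(t-s)F(X(s))\,ds+Z(t)$, I would apply $A^{\beta/2}$ and use the smoothing estimate $\|A^{\beta/2}S(t-s)\|_{\LL(\HH)}\le C(t-s)^{-\beta/2}$ for the drift term, together with $\|F(X(s))\|\le L_f(1+\|X(s)\|_{L^{2K}}^{K})\le C(1+\|X(s)\|_{E}^{K})$. The initial term contributes $\|A^{\beta/2}S(t)X_0\|\le C t^{-\beta/2}\|X_0\|$, the drift integral is bounded by $C\int_0^t(t-s)^{-\beta/2}(1+\E\|X(s)\|_{E}^{Kp})^{1/p}\,ds$, and the convolution term is controlled by Step 1. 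Raising to the $p$-th power and using Step 2 yields the second inequality, with the $t^{-\beta p/2}$ factor coming from the initial datum term.

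\emph{Main obstacle.} The routine part is bookkeeping with the smoothing estimates; the genuinely delicate step is the pathwise $E$-norm bound on $Y$ in Step 2, because the one-sided Lipschitz (rather than globally Lipschitz) structure forces one to exploit the sign of the leading term of $f$, and because the shift $Y\mapsto Y+Z$ destroys the naive monotonicity. I expect to handle it either by the $L^{2m}$-testing argument with $m\to\infty$ of \cite[Lem.~3.3]{BCH18}, or by a comparison-principle argument that absorbs $\|Z(s)\|_E$ into a super-solution. All subsequent moment bounds then reduce to combining this deterministic estimate with the standard Gaussian tail of $Z$ from Step 1.
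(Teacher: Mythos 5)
Your proposal is correct and follows essentially the same route as the paper, which itself only sketches the argument: decompose $X=Y+Z$ into the stochastic convolution and the random PDE, bound $Z$ by factorization/Burkholder estimates, bound $Y$ pathwise in $E$ via the one-sided Lipschitz structure as in \cite[Theorem 7.7]{DZ14} and \cite[Lemma 3.3]{BCH18}, and then read off the $\HH^{\beta}$ bound from the mild formulation with the smoothing of $S(t)$. Your write-up in fact supplies more detail than the paper does.
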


Now, we are in a position to derive a priori estimate for the semi-discretization Eq. \eqref{sge}. At first, we prove  the smoothing  property of $S^h(t),\; t\ge 0$.

\begin{lm}\label{smo0}
For $t>0$ and $2\le p\le \infty$, there exists 
 a positive constant $C$  such that for $f\in \HH$, 
\begin{align*}
\|S^h(t)P^hf\|_{L^p}\le Ct^{-\frac 12(\frac 12 -\frac 1p)}\|f\|.
\end{align*}
\end{lm}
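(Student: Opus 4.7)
The plan is to establish the estimate at the two endpoints $p=2$ and $p=\infty$, and then recover the intermediate range by elementary $L^p$-interpolation; the target exponent $\tfrac{1}{2}(\tfrac{1}{2}-\tfrac{1}{p})$ is concave/linear in $\tfrac{1}{p}$, which is exactly what interpolation will produce.

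First, at $p=2$, the estimate reduces to $\|S^h(t)P^h f\|\le \|f\|$, which follows from the contractivity of $S^h$ on $\HH$ together with $\|P^h\|_{\LL(\HH)}\le 1$; note $t^{-\frac12(\frac12-\frac12)}=1$, so the right-hand side matches. Next, at $p=\infty$, I would invoke Agmon's inequality in one spatial dimension,
\begin{align*}
\|v\|_{L^\infty}\le C\|v\|^{1/2}\|A^{1/2}v\|^{1/2},\qquad v\in \HH^1,
\end{align*}
applied to $v=S^h(t)P^h f\in V_h\subset \HH^1$ (the inclusion uses the continuity of piecewise-linear finite element functions vanishing at the boundary). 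The norm equivalence \eqref{eq-norm} with $\gamma=\tfrac12$ gives $\|A^{1/2}v\|\le C\|A_h^{1/2}v\|$ for $v\in V_h$, and the smoothing property \eqref{smo-ah} with $\gamma=\tfrac12$ yields $\|A_h^{1/2}S^h(t)P^h f\|\le C t^{-1/2}\|f\|$. Combined with the contractivity bound $\|S^h(t)P^h f\|\le \|f\|$, Agmon then gives
\begin{align*}
\|S^h(t)P^h f\|_{L^\infty}\le C\|f\|^{1/2}\bigl(C t^{-1/2}\|f\|\bigr)^{1/2}=C t^{-1/4}\|f\|,
\end{align*}
which is precisely the $p=\infty$ case.

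Finally, for $p\in(2,\infty)$, I would use the standard H\"older interpolation $\|v\|_{L^p}\le \|v\|^{2/p}\|v\|_{L^\infty}^{1-2/p}$ applied to $v=S^h(t)P^h f$, together with the two endpoint bounds just obtained, yielding
\begin{align*}
\|S^h(t)P^h f\|_{L^p}\le \|f\|^{2/p}\bigl(C t^{-1/4}\|f\|\bigr)^{1-2/p}=C t^{-\frac14(1-\frac{2}{p})}\|f\|,
\end{align*}
and the exponent simplifies to $\tfrac{1}{2}(\tfrac12-\tfrac{1}{p})$ as required.

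The only real work lies in the $p=\infty$ endpoint, where one must transfer the $\HH$-level smoothing of $S^h$ (measured in the graph norm $\|A_h^{1/2}\cdot\|$) into an $L^\infty$ bound; this is exactly where the norm equivalence \eqref{eq-norm}, the inclusion $V_h\subset \HH^1$, and the one-dimensional Agmon inequality combine. I do not expect any genuine obstacle beyond this endpoint, as the $p=2$ case is immediate and the intermediate range is routine interpolation.
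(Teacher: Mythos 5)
Your proof is correct, but it takes a genuinely different route from the paper's. The paper expands $S^h(t)P^hf=\sum_{i=1}^{N^h}e^{-\lambda_i^h t}\langle f,e_i^h\rangle e_i^h$ and derives the $L^\infty$ endpoint from two nontrivial facts about the discrete eigenproblem --- the uniform boundedness of the eigenfunctions $e_i^h$ in $E$ and the Weyl-type bounds $ci^2\le\lambda_i^h\le Ci^2$ --- obtaining $\|S^h(t)P^hf\|_E\le(\sum_i e^{-2\lambda_i^h t})^{1/2}\|f\|\le Ct^{-1/4}\|f\|$, and then concludes by Riesz--Thorin. You instead obtain the $L^\infty$ endpoint from the one-dimensional Agmon inequality combined with the discrete smoothing estimate \eqref{smo-ah} and the norm equivalence \eqref{eq-norm} (note that for $\gamma=\tfrac12$ one even has $\|A^{1/2}v^h\|=\|\nabla v^h\|=\|A_h^{1/2}v^h\|$ exactly), and then interpolate by the elementary H\"older inequality $\|v\|_{L^p}\le\|v\|^{2/p}\|v\|_{L^\infty}^{1-2/p}$ rather than Riesz--Thorin. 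Your argument has the advantage of relying only on properties already recorded in the paper's preliminaries, avoiding the somewhat delicate spectral facts about $\{\lambda_i^h,e_i^h\}$ that the paper imports from the literature; the eigenfunction expansion, on the other hand, is the device the paper reuses in Lemma \ref{pri-noi} for the stochastic convolution, where the pointwise products $e_i^he_j^h$ genuinely matter, so the paper's choice buys uniformity of technique across the section. Both arguments yield the same exponent $t^{-\frac12(\frac12-\frac1p)}$ and both endpoints check out, so there is no gap.
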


\begin{proof}
Since $S^h(t)P^hf\in V^h$, we have 
\begin{align*}
S^h(t)P^hf= \sum_{i=1}^{N^h}e^{-\lambda_i^h t}\<f,e_i^h\>e_i^h.
\end{align*}
Then the uniform boundness  of $e_i^h$ and $ci^2 \le \lambda _i^h\le Ci^{2}$, $1\le i\le N^h$ in \cite[Section]{AL16} yield that 
\begin{align*}
\|S^h(t)P^hf\|_{E}&=\|\sum_{i=1}^{N^h}e^{-\lambda_i^h t}\<f,e_i^h\>e_i^h\|_{E} \le (\sum_{i=1}^{N^h}e^{-2\lambda_i^h t})^{\frac 12}\|f\|\le Ct^{-\frac 14}\|f\|,
\end{align*}
and 
\begin{align*}
\|S^h(t)P^hf\|&\le \big\|\sum_{i=1}^{N^h}e^{-\lambda_i^h t}\<f,e_i^h\>e_i^h\big\|\le C\|f\|.
\end{align*}
The Riesz--Thorin interpolation theorem (see e.g. \cite{Ste56}) leads to the desired result. 
\end{proof}

The other tool to get the a priori estimate is the 
weak discrete maximum principle in  \cite[Lemma 3.4]{CLT94}.

\begin{lm}\label{smo}
Under the assumptions on $T^h$ and $V_h$,
there exists a positive constant $C$  such that, for any $v^h\in V_h$,
\begin{align}\label{wdmp}
\|S^h(t)v^h\|_{L^{\infty}}\le C\|v^h\|_{L^{\infty}}, \;\; t>0.
\end{align}
\end{lm}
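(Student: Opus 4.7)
My plan is to prove the weak discrete maximum principle for the semi-discrete heat semigroup $S^h(t)=e^{-tA_h}$ on $V_h$. The first thing to notice is the obstruction. While the continuous semigroup $S(t)$ is $L^\infty$-contractive by the parabolic maximum principle, $S^h(t)$ is not so easily handled: in the nodal basis, $A_h$ is represented by $M^{-1}K$ where $M$ is the mass matrix and $K$ is the stiffness matrix, and although $-K$ is an M-matrix in the piecewise linear 1D setting, $M^{-1}$ is not entrywise nonnegative. Hence $e^{-tA_h}$ does not in general preserve positivity, the constant in \eqref{wdmp} need not equal $1$, and a naive argument via the smoothing estimate in Lemma \ref{smo0} fails because $t^{-1/4}$ blows up as $t\to 0^+$.

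I would pursue the resolvent/contour approach pioneered by Crouzeix--Larsson--Thom\'ee. Using that $-A_h$ generates an analytic semigroup with sector of analyticity independent of $h$, one has the representation
\begin{align*}
S^h(t)=\frac{1}{2\pi i}\int_\Gamma e^{\lambda t}(\lambda I+A_h)^{-1}\,d\lambda,
\end{align*}
for a suitable contour $\Gamma$ lying in the resolvent sector and going to infinity. The desired estimate $\|S^h(t)v^h\|_{L^\infty}\le C\|v^h\|_{L^\infty}$ would then follow once I establish the discrete resolvent estimate
\begin{align*}
\|(\lambda I+A_h)^{-1}\|_{\mathcal L(L^\infty(V_h))}\le \frac{C}{|\lambda|+1},\qquad \lambda\in\Gamma,
\end{align*}
together with the standard parametrization of $\Gamma$ that makes $\int_\Gamma |e^{\lambda t}|/(|\lambda|+1)\,|d\lambda|$ bounded uniformly in $t>0$.

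The hard part, and really the entire substance of the lemma, is the uniform-in-$h$ $L^\infty$ resolvent bound. This is not accessible from $L^2$-energy arguments: one needs sharp pointwise bounds for the discrete Green's function of $-\Delta_h+\lambda I$, which in turn exploit the quasi-uniformity of $(T_h)$ and, in the one-dimensional linear-element setting, the explicit tridiagonal structure of $K$ together with mass-matrix perturbation estimates. Rather than reprove this, I would simply invoke \cite[Lemma 3.4]{CLT94} where exactly this bound is established under the stated mesh hypotheses; Lemma \ref{smo} then follows by the contour argument above. The $L^\infty$-boundedness of $P^h$ recorded in the previous subsection is not needed here since the data $v^h$ already lies in $V_h$.
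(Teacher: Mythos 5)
Your proposal is correct and in substance coincides with the paper's treatment: the paper offers no proof of this lemma at all, but simply quotes it as the weak discrete maximum principle from \cite[Lemma 3.4]{CLT94}, which is exactly the reference your argument ultimately rests on. The additional scaffolding you supply (the contour representation of $S^h(t)$ reduced to a uniform-in-$h$ discrete $L^\infty$ resolvent estimate) is the standard route by which that cited result is established, so nothing is gained or lost relative to the paper.
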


We remark that in the case of  higher dimension, the similar boundedness  results of  finite element methods  still hold (see e.g. \cite[Chapter 6]{Tho06}).
Next, we give the a priori estimate of the semi-discretized  
stochastic convolution $Z^h$, which satisfies 
\begin{align*}
dZ^h(t)+A_hZ^h(t)=P^hdW(t), \quad Z^h(0)=0.
\end{align*} 

\begin{lm}\label{pri-noi}
Let $\mathcal V=E $ or $L^{2q}$ $(q\ge 1)$.
Under Assumptions \ref{as-lap}-\ref{as-noi}, 
there exists a constant $C(T,p)>0$ such that  the discretized  stochastic convolution $Z^h$ satisfies that for $t\in (0,T]$ and $p\ge 1$, 
\begin{align*}
\E\Big[\|Z^h(t)\|_{\mathcal V}^p\Big]\le C(T,p), \quad \text{if} \quad \beta>\frac 12,\\
 \E\Big[\|Z^h(t)\|_{\mathcal V}^p\Big]\le C(T,p)(1+\log(\frac 1h))^{\frac p2}, \quad \text{if} \quad 0<\beta\le \frac 12.
\end{align*}
Moreover, if $Q=I$, $\beta \in [0,\frac 12)$, then
for $t\in (0,T]$ and $p\ge 1$, there exists a constant $C(T,p)$ such that 
\begin{align*}
\E\Big[\|Z^h(t)\|_{\mathcal V}^p\Big]\le C(T,p). 
\end{align*}
\end{lm}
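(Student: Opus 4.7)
Since $Z^h(t)$ is, for each $t$, a centered Gaussian random variable taking values in the finite-dimensional space $V_h\subset E\cap L^{2q}$, Fernique's theorem reduces every $L^p(\Omega;\mathcal V)$-moment to the corresponding $L^2(\Omega;\mathcal V)$-moment up to a dimension-free constant $C_p$. The overall strategy is therefore to estimate second moments via It\^o's isometry and the spectral/smoothing properties of $S^hP^h$ from Section \ref{sec-2}, and then to split into the three cases of the statement.

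For $\mathcal V=L^{2q}$ I would use Fubini and pointwise Gaussian hypercontractivity,
\begin{align*}
\E\|Z^h(t)\|_{L^{2q}}^{2q}=\int_{\OO}\E|Z^h(t,\xi)|^{2q}d\xi\le C_q\int_{\OO}\bigl(\E|Z^h(t,\xi)|^2\bigr)^qd\xi,
\end{align*}
reducing the task to bounding $\sup_{\xi\in\OO}\E|Z^h(t,\xi)|^2$, which by It\^o's isometry equals $\int_0^t\sum_k|(S^h(t-s)P^hQ^{1/2}e_k)(\xi)|^2ds$. For $\mathcal V=E$ I would pair this with a discrete Sobolev embedding. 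If $\beta>1/2$ one has $\|v^h\|_E\le C\|A_h^{\delta/2}v^h\|$ for some $\delta\in(1/2,\beta)$ uniformly in $h$ (valid in 1D); this reduces the problem to bounding $\int_0^t\|A_h^{\delta/2}S^h(s)P^h\|_{\LL_2^0}^2ds$, and combining the norm equivalence \eqref{eq-norm}, the smoothing \eqref{smo-ah}, and Assumption \ref{as-noi} yields
\begin{align*}
\|A_h^{\delta/2}S^h(s)P^h\|_{\LL_2^0}\le C\|A_h^{(1-\beta+\delta)/2}S^h(s)P^h\|_{\LL(\HH)}\|A^{(\beta-1)/2}\|_{\LL_2^0}\le Cs^{-(1-\beta+\delta)/2},
\end{align*}
an integrable singularity on $(0,t)$, giving the first asserted uniform bound.

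In the regime $0<\beta\le 1/2$ with general $Q$, the choice $\delta>1/2$ is no longer admissible, and I would invoke the critical discrete embedding $\|v^h\|_E\le C(1+\log(1/h))^{1/2}\|A_h^{\delta_\beta/2}v^h\|$ for an admissible $\delta_\beta\le\beta$, the discrete counterpart of the failure of $\HH^{1/2}\hookrightarrow E$ on quasi-uniform 1D meshes; this produces the stated $(1+\log(1/h))^{p/2}$ factor after combining with the integral bound above at $\delta=\delta_\beta$. In the borderline case $Q=I$, $\beta\in[0,1/2)$, the $\HH$-orthonormality of $\{e_i^h\}_{i=1}^{N_h}$ gives the explicit identity
\begin{align*}
\E|Z^h(t,\xi)|^2=\sum_{i=1}^{N_h}\frac{1-e^{-2\lambda_i^h t}}{2\lambda_i^h}|e_i^h(\xi)|^2,
\end{align*}
and, using the uniform bounds $\|e_i^h\|_E\le C$ and $\lambda_i^h\gtrsim i^2$ from the quasi-uniform 1D setting, the right-hand side is $\le C\sum_i i^{-2}\le C$, uniformly in $h$ and $\xi$. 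A Gaussian chaining (Fernique-in-$C(\OO)$) argument then upgrades the pointwise variance bound to the $E$-bound without any logarithmic loss.

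The main obstacle I anticipate is the intermediate case $\beta\le 1/2$ with general $Q$: one has to establish the discrete critical Sobolev embedding with $\log$-loss in a form that is (i) uniform in $h$ on the quasi-uniform family $(T_h)$, and (ii) compatible with the Gaussian reduction so that the $\log$ factor appears only with exponent $p/2$. An alternative, possibly cleaner, route is the factorization method $Z^h(t)=c_\alpha\int_0^t(t-s)^{\alpha-1}S^h(t-s)Y^h(s)ds$ combined with Lemma \ref{smo0} applied to $\|S^h(t-s)Y^h(s)\|_E$; this shifts the difficulty to tracking the boundary behaviour at $\alpha=1/4$ when $\beta=1/2$, which is precisely the source of the logarithmic factor.
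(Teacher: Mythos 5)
Your Gaussian reduction to second moments and your treatment of the two easy cases are essentially fine: for $\beta>\frac 12$ the factorization $A_h^{\delta/2}S^h(s)P^hQ^{1/2}=A_h^{(\delta+1-\beta)/2}S^h(s)\cdot A_h^{(\beta-1)/2}P^hQ^{1/2}$ together with \eqref{eq-norm} and \eqref{smo-ah} does give an integrable singularity for $\delta\in(\frac 12,\beta)$, and your explicit variance identity for $Q=I$ is exactly the mechanism the paper uses (there the cross terms vanish by orthonormality of $\{e_i^h\}$ and $\sum_i e^{-2\lambda_i^h s}\lesssim s^{-1/2}$). The genuine gap is the case $0<\beta\le\frac 12$ with general $Q$ --- precisely the one you flag as the main obstacle. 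The inequality you would rely on, $\|v^h\|_E\le C(1+\log(\frac 1h))^{1/2}\|A_h^{\delta_\beta/2}v^h\|$ with $\delta_\beta\le\beta<\frac 12$, is false uniformly in $h$: for the nodal hat function $v^h$ one has $\|v^h\|_E=1$, $\|v^h\|\sim h^{1/2}$, $\|A_h^{1/2}v^h\|=\|\nabla v^h\|\sim h^{-1/2}$, hence by spectral interpolation $\|A_h^{\delta/2}v^h\|\lesssim h^{(1-2\delta)/2}\to 0$ for $\delta<\frac 12$, and no logarithm can compensate. The logarithmic loss in the discrete Sobolev inequality is tied to the exponent $\frac 12$ exactly; but at $\delta_\beta=\frac 12$ your companion bound gives $\|A_h^{1/4}S^h(s)P^h\|_{\LL_2^0}^2\lesssim s^{-(3/2-\beta)}$, whose time integral, truncated at $s\sim h^2$, diverges like $h^{2\beta-1}$ for $\beta<\frac 12$. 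So neither endpoint of your scheme closes, and your fallback (factorization method at $\alpha=\frac 14$) runs into the same wall.

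The paper gets the logarithm from a different source and uses no embedding at all in this regime. Expanding the variance in the discrete eigenbasis, $\sum_k|(S^h(s)P^hQ^{1/2}e_k)(\xi)|^2=\sum_{i,j\le N_h}e^{-(\lambda_i^h+\lambda_j^h)s}\<Q^{\frac 12}e_i^h,Q^{\frac 12}e_j^h\>e_i^h(\xi)e_j^h(\xi)$, it uses only the boundedness of $Q$ and the uniform bound $\|e_i^h\|_E\le C$ for the discrete eigenfunctions on a quasi-uniform 1D mesh; the $s$-integral then produces $\sum_{i,j\le N_h}(\lambda_i^h+\lambda_j^h)^{-1}\sim\sum_{i,j\le N_h}(i^2+j^2)^{-1}\sim\log N_h\sim\log(\frac 1h)$, which enters inside the power $\frac p2$ via the $L^q$-valued Burkholder inequality \eqref{Burk}. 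Your own pointwise-variance formula already contains this double sum, so the repair is to carry out that computation rather than appeal to an embedding. Two further cautions: the off-diagonal terms $\<Q^{\frac 12}e_i^h,Q^{\frac 12}e_j^h\>$ do not vanish for general $Q$ (so the clean identity you wrote for $Q=I$ is not available), and for $\mathcal V=E$ a uniform pointwise variance bound alone does not control $\E\big[\sup_{\xi}|Z^h(t,\xi)|^p\big]$; one needs either an $h$-uniform chaining/entropy estimate or, as here, an argument that bounds the supremum of the square function directly through $\|e_i^h\|_E\le C$.
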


\begin{proof}
The a priori estimate in  
the case that $\beta>\frac 12$ is directly proven by 
using the Sobolev embedding theorem, the Burkholder inequality, and the smoothing property of $A_h$ \eqref{smo-ah}.
Now we focus on the case $\beta\le \frac 12$ and take $\mathcal V=L^{2q}$, $q\ge 1$ as example. Similar arguments yield  the case $\mathcal V= E$.
Notice that  $Z^h(t,\xi)=\sum_{k\in \N^+}\int_0^t 
\sum_{i=1}^{N^h} e^{-\lambda_i^h s} \<\sqrt{q_k}e_k, e_i^h\>e_i^h(\xi)d\beta_k(s)$, where $\{e_{k},q_k\}_{k\in \N^+}$ is the eigensystem of $Q$. The Fubini theorem, Fourier transform, Burkholder inequality \eqref{Burk} and uniform boundedness  of $e_i^h, 1\le i\le N^h$ in $E$ (see e.g. \cite[Appendix]{Wal05})  yield that 
\begin{align*}
\E\Big[\|Z^h(t)\|_{L^{2q}}^p\Big]
&\le C\E \Big[\Big(\int_0^t \Big\|\sum_{k\in\N^+}  \big(\sum_{i=1}^{N^h} e^{-\lambda_i^h s} \<\sqrt{q_k}e_k, e_i^h\>e_i^h\big)^2 \Big\|_{L^{q}}ds\Big)^{\frac p2}\Big]\\
&\le 
C\E \Big[\Big(\int_0^t \Big\|  \sum_{i,j=1}^{N^h} e^{-(\lambda_i^h+\lambda_j^h)s} \<Q^{\frac 12}e_i^h, Q^{\frac 12}e_j^h\>e_i^he_j^h \Big\|_{L^{q}}ds\Big)^{\frac p2}\Big]\\
&\le 
C\E \Big[\Big(\int_0^t \sum_{i,j=1}^{N^h}e^{-(\lambda_i^h+\lambda_j^h)s} \|e_i^h\|_E\|e_i^h\| \|e_j^h\|_E\|e_j^h\|ds\Big)^{\frac p2}\Big]\\
&\le C\E \Big[\Big(\int_0^t \sum_{i,j=1}^{N^h}e^{-(\lambda_i^h+\lambda_j^h)s}ds\Big)^{\frac p2}\Big].
\end{align*}
By $ci^2 \le \lambda _i^h\le Ci^{2}, 1\le i\le N^h$ and $N^h\le \mathcal O(\frac 1h)$, we have 
\begin{align*}
\E\Big[\|Z^h(t)\|_{L^{2q}}^p\Big]
&\le C\E \Big[\Big(\int_0^t \sum_{i,j=1}^{N^h}e^{-(\lambda_i^h+\lambda_j^h)s}ds\Big)^{\frac p2}\Big]\le C \E \Big[\Big(\int_0^t \sum_{i=1}^{(N^h)^2}e^{-ics}ds\Big)^{\frac p2}\Big]\\
&\le C \E \Big[\Big(\int_0^{h^{l}}\frac 1{h^2}ds\Big)^{\frac p2}\Big]+C\E \Big[\Big(\int_{h^l}^t \int_1^{\infty}e^{-c\xi s}d\xi ds\Big)^{\frac p2}\Big]\\
&\le C(h^{l-2}+(\log(1+t)+\log(\frac 1 h))^{\frac p2})\\
&\le C(1+(\log(1+t))^{\frac p2}+(\log(\frac 1h))^{\frac p2}),
\end{align*}
for a large $l\in \N^+$.
In particular, if $Q=I$, then the logarithmic factor can be eliminated  as
\begin{align*}
\E\Big[\|Z^h(t)\|_{L^{2q}}^p\Big]
&\le C\E \Big[\Big(\int_0^t \Big\|\sum_{k\in\N^+}  \big(\sum_{i=1}^{N^h} e^{-\lambda_i^h s} \<e_k, e_i^h\>e_i^h\big)^2 \Big\|_{L^{q}}ds\Big)^{\frac p2}\Big]\\
&\le 
C\E \Big[\Big(\int_0^t \Big\|\sum_{k\in\N^+}  \sum_{i,j=1}^{N^h} e^{-(\lambda_i^h+\lambda_j^h)s} \<e_k, e_i^h\>\<e_k, e_j^h\>e_i^he_j^h \Big\|_{L^{q}}ds\Big)^{\frac p2}\Big]\\
&\le 
C\E \Big[\Big(\int_0^t \Big\|  \sum_{i,j=1}^{N^h} e^{-(\lambda_i^h+\lambda_j^h)s} \<e_i^h, e_j^h\>e_i^he_j^h \Big\|_{L^{q}}ds\Big)^{\frac p2}\Big]\\
&\le 
C\E \Big[\Big(\int_0^t \Big\|  \sum_{i=1}^{N^h} e^{-2\lambda_i^hs} (e_i^h)^2\Big\|_{L^{q}}ds\Big)^{\frac p2}\Big]
\le Ct^{\frac p4}.
\end{align*}
Summing up all the estimates, we finish the proof.
\end{proof}

The following a priori estimate is very useful for deducing 
the weak convergence rate in Section \ref{sec-wea} and has its own interest.

\begin{prop}\label{pri-fem}
Let $\mathcal V=E$ or $ L^{2q}$ $(q\ge 1)$.
Under Assumptions \ref{as-lap}-\ref{as-dri} with $K<5$, there exists $C(p,T,X_0)>0$ such that the  unique mild solution $X^h$ of Eq. \eqref{sge}  satisfies 
\begin{align*}
\sup_{t\in[0,T]}\E\Big[\|X^h(t)\|_{\mathcal V}^p\Big]&\le C(p,T,X_0)(1+(\log(\frac 1h)))^{\frac {K^2p} 2},\;\; \text{for} \;\; p\ge 1.
\end{align*}
\end{prop}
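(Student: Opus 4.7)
The plan is to separate $X^h$ into its stochastic and deterministic parts and bound each in $\mathcal V$, then upgrade by bootstrapping through the smoothing of $S^h$. Concretely, I would write $X^h=Y^h+Z^h$, where $Z^h$ is the discrete stochastic convolution (already controlled in $\mathcal V$ by Lemma~\ref{pri-noi} up to a $(1+\log(1/h))^{p/2}$ factor) and $Y^h$ solves the random evolution equation
\begin{align*}
\partial_t Y^h + A_h Y^h = P^h F(Y^h + Z^h),\quad Y^h(0)=X_0^h.
\end{align*}

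First I would establish a baseline $L^2$-moment estimate of $Y^h$. Pairing the random PDE with $Y^h$ in $\HH$, using $\<A_h Y^h,Y^h\>\ge 0$ and the one-sided Lipschitz decomposition
\begin{align*}
\<F(Y^h+Z^h),Y^h\> \le L_f\|Y^h\|^2+\|F(Z^h)\|\,\|Y^h\|,
\end{align*}
together with the polynomial bound $\|F(Z^h)\|\le C(1+\|Z^h\|_{L^{2K}}^K)$ and Gronwall's lemma, yields
\begin{align*}
\E\Big[\sup_{t\in[0,T]}\|Y^h(t)\|^{2p}\Big]\le C\|X_0\|^{2p}+C\int_0^T\E\Big[\|Z^h(s)\|_{L^{2K}}^{2Kp}\Big]ds\le C(1+\log(1/h))^{Kp}.
\end{align*}

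Next I would lift this $L^2$ control to $L^{2q}$ and $E$ through the mild formula
\begin{align*}
Y^h(t)=S^h(t)X_0^h+\int_0^t S^h(t-s)P^h F(X^h(s))\,ds.
\end{align*}
Combining the $\HH$-to-$L^{2q}$ smoothing of Lemma~\ref{smo0} (with time-singular kernel of order $(t-s)^{-\frac12(\frac12-\frac1{2q})}$), the discrete maximum principle of Lemma~\ref{smo} (giving $\|S^h\|_{\LL(E)}\le C$), and the $L^p$-stability of $P^h$, the task reduces to controlling $\|F(X^h(s))\|$ in an intermediate $L^r$. The polynomial bound $\|F(X^h)\|_{L^r}\le C(1+\|X^h\|_{L^{Kr}}^K)\le C(1+\|Y^h\|_{L^{Kr}}^K+\|Z^h\|_{L^{Kr}}^K)$, together with Lemma~\ref{pri-noi} and a bootstrap starting from the $L^2$ estimate and climbing through increasing $L^p$ spaces, would close the argument, and finally $\|X^h\|_{\mathcal V}\le \|Y^h\|_{\mathcal V}+\|Z^h\|_{\mathcal V}$ gives the stated bound.

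The main obstacle is this bootstrap: each round multiplies the exponent of the log factor by a factor up to $K$, while the polynomial nonlinearity forces a higher integrability of $X^h$ on the right than on the left. The hypothesis $K<5$ is used precisely to keep the singular kernels arising from the smoothing of Lemma~\ref{smo0} Lebesgue-integrable at every intermediate step (essentially ensuring that the critical smoothing exponent $(K-1)/(4K)$ produced at the transition $\HH\to L^{2K}$ can be combined with polynomial-moment arithmetic via Hölder without losing integrability). After a bounded number of rounds determined by $K$, the $Y^h$ bound acquires the advertised $(1+\log(1/h))^{K^2p/2}$ growth.
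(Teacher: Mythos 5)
Your decomposition $X^h=Y^h+Z^h$ and the baseline $L^2$-energy estimate for $Y^h$ match the paper's first step, but the second half of your argument has a genuine gap: the proposed bootstrap through increasing $L^p$ spaces does not close. Every application of the smoothing Lemma \ref{smo0} to the mild form costs you $\|F(X^h(s))\|\le C(1+\|X^h(s)\|_{L^{2K}}^K)$ on the right-hand side, so whatever norm of $Y^h$ you are trying to bound (be it $L^{2q}$ or $E$) you always need $\|X^h(s)\|_{L^{2K}}^K$. Taking $2q=2K$, the resulting integral inequality reads $N(t)\le C+C\int_0^t(t-s)^{-\theta}N(s)^K\,ds$ with $N(s)=\|Y^h(s)\|_{L^{2K}}$, which is superlinear and yields no global-in-time bound via Gronwall; climbing through intermediate $L^r$ spaces does not help, because $F$ maps $L^{Kr}$ to $L^r$, so each round demands strictly more integrability than it delivers. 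The paper's way out is precisely the ingredient you dropped: the energy estimate is retained together with its dissipation term, giving a bound on $\big(\int_0^T\|\nabla Y^h\|^2ds\big)^p$ as well as on $\sup_t\|Y^h(t)\|^{2p}$, and the Gagliardo--Nirenberg inequality $\|f\|_{L^{2K}}\le C\|\nabla f\|^{\frac{K-1}{2K}}\|f\|^{\frac{K+1}{2K}}$ then converts $\|Y^h\|_{L^{2K}}^K$ into $\|\nabla Y^h\|^{\frac{K-1}{2}}\|Y^h\|^{\frac{K+1}{2}}$, both factors already controlled. With that substitution the mild-form estimate for $\|Y^h(t)\|_E$ involves only known quantities and closes after one application of H\"older in time and Young's inequality; no iteration is needed.

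Your account of where $K<5$ enters is also off. The exponent $\frac12(\frac12-\frac1{2K})=\frac{K-1}{4K}$ from Lemma \ref{smo0} stays below $\frac14$ for every $K$, so kernel integrability at the transition $\HH\to L^{2K}$ is never the issue. The restriction comes from the H\"older step applied to $\int_0^t(t-s)^{-\frac14}\|\nabla Y^h(s)\|^{\frac{K-1}{2}}ds$: pairing $\|\nabla Y^h\|^{\frac{K-1}{2}}$ with the exponent $\frac{4}{K-1}$, so as to land on the controlled quantity $\int_0^T\|\nabla Y^h\|^2ds$, forces the kernel onto the conjugate exponent $\frac{4}{5-K}$, and one needs $K<5$ for that exponent to be finite. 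In other words, the constraint governs the interplay between the dissipation integral and the singular kernel, not any ``polynomial-moment arithmetic'' along a chain of $L^p$ spaces.
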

\begin{proof}
For convenience, we only prove the case $\mathcal V=E$.
By the equivalence of the stochastic PDE 
\begin{align*}
dX^h+A_hX^hdt=P^hF(X^h)dt+P^hdW(t), X^h(0)=P^hX_0,
\end{align*} 
and the random PDE
\begin{align*}
dY^h+A_hY^hdt&=P^hF(Y^h+Z^h)dt, Y^h(0)=P^hX_0,\\
dZ^h+A_hZ^hdt&=P^hdW(t),Z^h(0)=0
\end{align*}
and Lemma \ref{pri-noi}, it suffices to bound $\E\Big[\|Y^h(t)\|_{E}^p\Big]$. 
The higher regularity of $Y^h$ and  the dissipativity  of  $F$ imply  that
\begin{align*}
&\|Y^h(t)\|^2+2\int_0^t\<\nabla Y^h,\nabla Y^h\>ds
\\
&=
\|X^h(0)\|^2+2\int_0^t\<F(Y^h+Z^h),Y^h\>ds\\
&= \|X^h(0)\|^2+
2\int_0^t\<F(Y^h+Z^h)-F(Z^h),Y^h\>ds
+2\int_0^t\<F(Z^h),Y^h\>ds\\
&\le \|X^h(0)\|^2+
C\int_0^t\|Y^h\|^2ds
+C\int_0^t(1+\|Z^h\|_{L^{2K}}^{2K})ds.
\end{align*}
The $p$-moment boundedness of $\|Z^h\|_{E}$ and the Gronwall's inequality yield that for $p\ge 1$,
\begin{align*}
&\sup_{t\in [0,T]}\|Y^h(t)\|^{2p}
+\big(\int_0^T\|\nabla Y^h\|^2ds\big)^{p}\\
&\le 
C(p,T)(1+\|X^h(0)\|^{2p}+\int_0^T\|Z^h\|_{L^{2K}}^{2Kp}ds).
\end{align*}
Next, based on the above estimates and Lemma \ref{pri-noi}, we are in position to
prove the desired result.
The mild form of $Y^h$, Lemmas \ref{smo0} and \ref{smo}, together with the Gagliardo--Nirenberg--Sobolev inequality $\|f\|_{L^{2K}}\le C\|\nabla f\|^{\frac {K-1}{2K}}\|f\|^{\frac {K+1}{2K}}$, lead to
\begin{align*}
\|Y^h(t)\|_{E}&\le \|S^h(t)P^hX_0\|_E+\int_0^t\|S^h(t-s)P^hF(Y^h+Z^h)\|_Eds\\
&\le C\|X_0\|_E
+C\int_0^t(t-s)^{-\frac 14}\|F(Y^h+Z^h)\|ds\\
&\le C\|X_0\|_E
+C\int_0^t(t-s)^{-\frac 14}(1+\|Y^h\|_{L^{2K}}^K+\|Z^h\|_{L^{2K}}^{K})ds\\
&\le  C\|X_0\|_E
+C\int_0^t(t-s)^{-\frac 14}(1+\|Z^h\|_{L^{2K}}^K)ds\\
&\quad+C\int_0^t(t-s)^{-\frac 14}\|\nabla Y^h\|^{\frac {K-1}2}\|Y^h\|^{\frac {K+1}2}ds.
\end{align*}
Taking the $p$th moment on both sides, together with the H\"older and Young inequalities and the boundedness of $\int_0^T\|\nabla Y^h\|^2ds$ and $Z^h$, yields that for $p\ge 1$ and  $K<5$,
\begin{align*}
&\E\Big[\sup_{t\in[0,T]}\|Y^h(t)\|_{E}^p\Big]\\
&\le C\|X_0\|_E^p+C(T,p)(1+\log(\frac 1h))^{\frac {Kp}2}\\
&\quad+C\E\Big[(\int_0^t(t-s)^{-\frac 14}\|\nabla Y^h\|^{\frac {K-1}2}ds)^p\sup_{s\in [0,T]}\|Y^h(s)\|^{\frac {(K+1)p}2}\Big]\\
&\le C\sqrt{\E\Big[\big(\sup_{t\in[0,T]}\int_0^t(t-s)^{-\frac 14}\|\nabla Y^h\|^{\frac {K-1}2}ds\big)^{2p}\Big]}
\sqrt{\E\Big[\sup_{s\in [0,T]}\|Y^h(s)\|^{(K+1)p}\Big]}\\
&\quad+C(X_0,T,p)(1+\log(\frac 1h))^{\frac {Kp}2}\\
&\le C(X_0,T,p)(1+\log(\frac 1h))^{\frac {(K+1)Kp}4}\sqrt{\E\Big[\big(\sup_{t\in[0,T]}\int_0^t\|\nabla Y^h\|^2ds\big)^{\frac {(K-1)p}2}\Big]}\\
&\quad+C(X_0,T,p)(1+\log(\frac 1h))^{\frac {Kp}2}\\
&\le C(X_0,T,p)(1+\log(\frac 1h))^{\frac {K^2p}2}.
\end{align*}
Using the fact that
$$ 
\E\Big[\sup\limits_{t\in[0,T]}\|X^h(t)\|_E^p\Big]\le C_p
\E\Big[\sup\limits_{t\in[0,T]}\|Y^h\|_E^p\Big]+C_p\E\Big[\sup\limits_{t\in[0,T]}\|Z^h(t)\|_E^p\Big],$$ together with the a priori estimate on $Y^h$ above and $Z^h$ in Lemma \ref{pri-noi}, we complete the proof.
\end{proof}

\begin{rk}
Under the assumptions of Proposition \ref{pri-fem},
if  in addition  assume that $\|A^{\frac {\beta -1}2}\|_{\LL_2^0}<\infty,\beta>\frac 12$ or that $Q=I$,
we have the following  optimal estimate 
\begin{align*}
\sup_{t\in[0,T]}\E\Big[\|X^h(t)\|_{\mathcal V}^p\Big]&\le C(p,T,X_0).
\end{align*}
The above a priori estimate of $X^h$ in $\mathcal V$ is
a crucial part to derive the weak convergence rate in Section \ref{sec-wea}. 
This is the main reason why we require that Assumption \ref{as-dri} holds for $K<5$ in Proposition \ref{pri-fem}.
\end{rk}

\subsection{Strong convergence rate}

In this subsection, we aim to give the strong convergence result of the finite element method.
 We also remark that this approach to get strong convergence rates does not require 
the additional a priori estimate of the spatial approximation $X^h$.

\begin{tm}\label{strong}
Under Assumptions \ref{as-lap}-\ref{as-dri}, the finite element approximation $X^h(t)$ is strongly convergent to $X(t)$, $t\in (0,T]$ and satisfies, for $p\ge 1$,  
\begin{align*}
\E\Big[\|X(t)-X^h(t)\|^p\Big]&\le C(X_0,T,p)(1+t^{-\frac \beta 2})^{p}h^{\beta p},\;\; \text{for} \;
\beta>\frac 12,\\
\E\Big[\|X(t)-X^h(t)\|^p\Big]&\le C(X_0,T,p)(1+t^{-\frac {\beta}2}+
(\log(\frac 1h))^{\frac {(K-1)}2})^ph^{\beta p},\;\;\text{for}\; \beta \le \frac 12.
\end{align*}
\end{tm}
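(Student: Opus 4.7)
The plan is to split the error as $X - X^h = (Y - Y^h) + (Z - Z^h)$, where $Z$ and $Z^h$ are the stochastic convolutions introduced in the proof of Proposition~\ref{pri-fem}, while $Y := X - Z$ and $Y^h := X^h - Z^h$ solve the random PDEs $dY + AY\,dt = F(Y+Z)\,dt$, $Y(0)=X_0$, and $dY^h + A_h Y^h\,dt = P^h F(Y^h + Z^h)\,dt$, $Y^h(0)=P^h X_0$. The advantage is that the rough stochastic contribution $Z - Z^h$ is purely linear and can be handled by the semigroup error estimates in \eqref{ord-str}, while the nonlinear part $Y - Y^h$ inherits additional pathwise regularity and admits a variational (energy) argument.

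For the stochastic part, the identity $Z(t) - Z^h(t) = -\int_0^t G^h(t-s)\,dW(s)$ together with the Burkholder inequality and the factorization $Q^{1/2} = A^{(1-\beta)/2}[A^{(\beta-1)/2} Q^{1/2}]$ reduces the bound to estimating $\int_0^t \|G^h(s) A^{(1-\beta)/2}\|_{\LL(\HH)}^2\,ds$. An interpolation between the second and third inequalities of \eqref{ord-str} (using the commutativity of $Q$ and $A$ from Assumption~\ref{as-noi} when $\beta \le 1/2$) yields the uniform-in-$t$ bound $C h^{2\beta}$, hence $\E\|Z(t) - Z^h(t)\|^p \le C h^{\beta p}$.

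For the deterministic part, I would use a Ritz-projection splitting $Y - Y^h = (Y - R^h Y) + (R^h Y - Y^h) =: \eta + e^h$ with $e^h \in V_h$. The identity $A_h R^h = P^h A$ gives $\partial_t(R^h Y) + A_h R^h Y = R^h F(Y+Z)$, so
\[
\partial_t e^h + A_h e^h = R^h F(Y+Z) - P^h F(Y^h + Z^h).
\]
Testing against $e^h$ and rewriting the right-hand side as $\<(R^h - P^h) F(Y+Z), e^h\> + \<F(Y+Z) - F(Y^h+Z^h), e^h\>$, the first summand is controlled using \eqref{err-ord} together with the growth of $F$, and the second is decomposed as
\[
\<F(Y+Z) - F(Y^h+Z^h), (Y+Z) - (Y^h+Z^h)\> - \<F(Y+Z) - F(Y^h+Z^h), \eta + (Z - Z^h)\>.
\]
The one-sided Lipschitz bound controls the first piece by $L_f\|(Y - Y^h) + (Z - Z^h)\|^2$, and the local Lipschitz bound with growth $(1 + \|Y+Z\|_E^{K-1} + \|Y^h+Z^h\|_E^{K-1})$ controls the second. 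Gronwall's inequality, combined with $\|\eta(t)\| \le C h^\beta \|Y(t)\|_{\HH^\beta}$ from \eqref{err-ord} and the singular-in-time bound $\|Y(t)\|_{\HH^\beta} \le C(1 + t^{-\beta/2})$ inherited from $S(t) X_0$, then closes the estimate.

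The main obstacle is the nonlinear cross term $\<F(Y+Z) - F(Y^h+Z^h), \eta + (Z - Z^h)\>$: one must apply H\"older's inequality carefully with the sharp $E$-moment bounds for $Y^h$, $Z$ and $Z^h$ from Lemmas~\ref{exa}--\ref{pri-noi} and Proposition~\ref{pri-fem}, and balance exponents so that the prefactor $(1 + \|\cdot\|_E^{K-1})$ does not deteriorate the rate. Because Lemma~\ref{pri-noi} only bounds $\|Z^h\|_E^p$ by $(1 + \log(1/h))^{p/2}$ when $\beta \le 1/2$, this cross term is precisely what generates the $(\log(1/h))^{(K-1)/2}$ factor in the second estimate, while the initial-data singularity $t^{-\beta/2}$ enters through the $\HH^\beta$ regularity of $Y$ in the bound on $\eta$.
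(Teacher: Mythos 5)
Your overall plan (peel off the stochastic convolution, treat $Z-Z^h$ with the error-operator estimates \eqref{ord-str} and the Burkholder inequality, then run a monotonicity/energy argument on the remaining part) is in the right spirit, and the $Z-Z^h$ estimate is essentially the paper's. However, the treatment of $Y-Y^h$ has a genuine gap. The identity you invoke is wrong: since $A_hR^h=P^hA$, one has
\begin{align*}
\partial_t(R^hY)+A_hR^hY=R^hF(Y+Z)+(P^h-R^h)AY=(R^h-P^h)\partial_tY+P^hF(Y+Z),
\end{align*}
so the equation for $e^h=R^hY-Y^h$ carries the extra source $(R^h-P^h)\partial_tY$ that you have silently dropped. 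This is precisely the classical nonsmooth-data obstruction in the Ritz-projection energy method: with only $X_0\in E$ the solution $Y$ satisfies $\|Y(t)\|_{\HH^{2-\epsilon}}\lesssim 1+t^{-1+\epsilon/2}$ at best, so $\partial_tY=-AY+F(Y+Z)$ has no spare positive regularity, \eqref{err-ord} cannot be applied to $(I-R^h)\partial_tY$ at order $h^{\beta}$, and the term cannot simply be absorbed by Gronwall. The paper sidesteps this entirely by introducing the auxiliary process $\widetilde Y^h$ solving $d\widetilde Y^h+A_h\widetilde Y^h\,dt=P^hF(Y+Z)\,dt$, $\widetilde Y^h(0)=X^h(0)$: then $Y-\widetilde Y^h=G^h(t)X_0+\int_0^tG^h(t-s)F(Y+Z)\,ds$ is controlled directly by \eqref{ord-str} (no Ritz projection, no time derivative of $Y$), and the remaining piece $\widetilde Y^h-Y^h$ lies entirely in $V_h$, where the clean energy argument with the one-sided Lipschitz condition applies.

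There is a second, quantitative gap. Your cross term $\<F(Y+Z)-F(Y^h+Z^h),\eta+(Z-Z^h)\>$ forces you to bound $\E\big[\|Y^h+Z^h\|_E^{p(K-1)}\big]=\E\big[\|X^h\|_E^{p(K-1)}\big]$, i.e.\ to invoke Proposition \ref{pri-fem}, which is proved only for $K<5$ and carries the factor $(\log(\frac1h))^{K^2p/2}$; this would both restrict $K$ and degrade the logarithmic exponent below the claimed $(\log(\frac1h))^{(K-1)/2}$. In the paper's splitting the monotonicity is applied to $F(\widetilde Y^h+Z^h)-F(Y^h+Z^h)$ paired with $\widetilde Y^h-Y^h$ (where the increments match exactly), and the local Lipschitz bound is applied only to $F(Y+Z)-F(\widetilde Y^h+Z^h)$, so the only $E$-norms needed are those of $Y$, $Z$, $Z^h$ and $\widetilde Y^h$ --- the last of which is bounded uniformly because $\widetilde Y^h$ is driven by the exact drift. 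This is exactly why the paper can remark that the strong-rate proof does not require the a priori estimate of $X^h$. To repair your argument you would need both to account for the dropped $(R^h-P^h)\partial_tY$ term and to reorganize the nonlinear terms so that monotonicity, not the local Lipschitz bound, absorbs everything evaluated at the numerical solution.
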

\begin{proof}
Since $A$  does not commute with $P^h$, we could not  use the usual strategy which divides  the strong error 
$X(t)-X^h(t)$ into $(I-P^h)X(t)$ and $P^hX(t)-X^h(t)$.
Thus, we introduce a new auxiliary process $\widetilde Y^h$ which satisfies 
\begin{align*}
d\widetilde Y^h+A_h\widetilde Y^hdt=P^hF(Y+Z)dt, \quad \widetilde Y^h(0)=X^h(0).
\end{align*}
Now, we split strong error as 
\begin{align*}
X(t)-X^h(t)&=Y(t)-Y^h(t)+Z(t)-Z^h(t)\\
&=(Y(t)-\widetilde Y^h(t))+(\widetilde Y^h(t)-Y^h(t))+(Z(t)-Z^h(t)),
\end{align*}
and estimate the three terms, respectively.
Using 
the estimates \eqref{err-ord} of $G^h(t):=S^h(t)P^h-S(t)$, $t\ge 0$ and Burkholder inequality, we get 
\begin{align*}
&\E\Big[\|Z(t)-Z^h(t)\|^p\Big]\\
&\le C_p\E\Big[\Big(\int_0^t\|G^h(t-s)\|^2_{\LL_2^0}ds\Big)^{\frac p2}\Big]\\
&\le C_p\E\Big[\Big(\int_0^t\|(R^h-I)S(t-s)\|^2_{\LL_2^0}ds\Big)^{\frac p2}\Big]\\
&\le C_p\E\Big[\Big(\int_0^t\|(R^h-I)A^{-\frac \beta 2}\|_{\LL}^2\|A^{\frac 12}S(t-s)A^{\frac {\beta -1}2}\|^2_{\LL_2^0}ds\Big)^{\frac p2}\Big]\\
&\le C(T,p)\|A^{\frac {\beta -1}2}\|_{\LL_2^0}^ph^{\beta p}.
\end{align*} 
The mild forms of $Y$ and $\widetilde Y^h$, together with the a priori estimates of $Y$ and $Z$ and the properties  \eqref{ord-str} of finite element method, yield that for $0\le u <2$,
\begin{align*}
&\E\Big[\|Y(t)-\widetilde Y^h(t)\|^p\Big]\\
&\le 
C_p\E\Big[\|G^h(t)X_0\|^p\Big]
+C_p\E\Big[\Big\|\int_0^tG^h(t-s)F(Y+Z)ds\Big\|^p\Big]\\
&\le 
C_p h^{u p}t^{-\frac{ u p} 2}\|X_0\|^p
+
C_p
\E\Big[\Big\|\int_0^tG^h(t-s)F(Y+Z)ds\Big\|^p\Big]\\
&\le 
C_p h^{u p}t^{-\frac {u p} 2}\|X_0\|^p
+
C(p,T)
\E\Big[\Big(\int_0^t h^{u}(t-s)^{-\frac u 2}\|F(Y(s)+Z(s))\|ds\Big)^p\Big]\\
&\le 
C_p h^{u p}t^{-\frac {u p} 2}\|X_0\|^p
+
C(p,T)h^{u p}\E\Big[\sup_{s\in [0,T]}\Big(1+\|Y(s)\|^{K}_{L^{2K}}+\|Z(s)\|^K_{L^{2K}}\Big)^p\Big]\\
&\le C(X_0,T,p) h^{u p} (1+t^{-\frac {u p} 2}).
\end{align*}
Notice that the similar arguments as in the proof of \cite[ Lemma 3.1]{BCH18} yield that  $\E\Big[\|\widetilde Y^h(t)\|_E^p\Big]\le C(p,T,X_0)$.
Next we deal with the term $\widetilde Y^h(t)-Y^h(t)$.
The random PDE forms of $\widetilde Y^h(t)$ and $Y^h(t)$
lead to
\begin{align*}
&\|\widetilde Y^h(t)- Y^h(t)\|^2\\
&\le 
-2\int_0^t \|\nabla(\widetilde Y^h(s)- Y^h(s)) \|^2ds\\
&\quad+\int_0^t2\<F(Y(s)+Z(s))-F(Y^h(s)+Z^h(s)), \widetilde Y^h(s)- Y^h(s)\>ds\\
&\le \int_0^t2\<F(Y(s)+Z(s))-F(\widetilde Y^h(s)+Z^h(s)), \widetilde Y^h(s)- Y^h(s)\>ds\\
&\quad+\int_0^t2\<F(\widetilde Y^h(s)+Z^h(s))- F(Y^h(s)+Z^h(s)), \widetilde Y^h(s)- Y^h(s)\>ds.
\end{align*}
For $\beta\le \frac 12$, by the monotonicity of $F$ and Lemma \ref{pri-noi}, we have 
\begin{align*}
&\sup_{s\in [0,t]}\|\widetilde Y^h(s)- Y^h(s)\|^{2}\\
&\le 
C\int_0^t \sup_{s\in [0,r]}\|\widetilde Y^h(s)- Y^h(s)\|^{2}dr
+C\Big(\int_0^t\big(\|Y(r)-\widetilde Y^h(r)\|
+\|Z(r)-Z^h(r)\|\big)\big(1+\\
&\quad \quad\|Y(r)\|_E^{K-1}+\|\widetilde Y^h(r)\|_E^{K-1}+\|Z(r)\|_E^{K-1}+\|Z^h(r)\|^{K-1}_E\big)\|\widetilde Y^h(r)- Y^h(r)\|dr\Big)\\
&\le
C\int_0^t \sup_{s\in [0,r]}\|\widetilde Y^h(s)- Y^h(s)\|^{2}dr
+\epsilon \sup_{r\in [0,t]}\|\widetilde Y^h(r)- Y^h(r)\|^2
\\
&\quad+C(\epsilon)\Big(\int_0^t\big(\|Y(r)-\widetilde Y^h(r)\|
+\|Z(r)-Z^h(r)\|\big)\\
&\quad \quad\big(1+\|Y(r)\|_E^{K-1}+\|\widetilde Y^h(r)\|_E^{K-1}+\|Z(r)\|_E^{K-1}+\|Z^h(r)\|^{K-1}_E\big)dr\Big)^2.
\end{align*}
Taking the $p$th moment yields that  for $0 \le \mu <2$,
\begin{align*} 
&\E\Big[\sup_{s\in [0,t]}\|\widetilde Y^h(s)- Y^h(s)\|^{2p}\Big]\\
&\le C_p\int_0^t \E \Big[\sup_{s\in [0,r]}\|\widetilde Y^h(s)- Y^h(s)\|^{2p}\Big]dr
+C(X_0,T,p)\Big(1+\log(\frac 1h)\Big)^{(K-1)p}
\\
&\qquad\times\Big(\Big(\int_0^t h^{\mu}(1+s^{-\frac \mu2})ds\Big)^{2p}
+h^{2\beta p}\Big)\\
&\le C\int_0^t\E\Big[\sup_{s\in [0,r]}\|\widetilde Y^h(s)- Y^h(s)\|^{2p}\Big]dr+C(X_0,T,p)\Big(1+\log(\frac 1h)\Big)^{(K-1)p}
h^{2\beta p}.
\end{align*}
Then the Gronwall inequality leads to
\begin{align*}
\E\Big[\|\widetilde Y^h(t)- Y^h(t)\|^{2p}\Big]
&\le C(X_0,T,p)(1+\log(\frac 1h))^{(K-1)p}h^{2\beta p}.
\end{align*}
For $\beta>\frac 12$, similar arguments, together with  
 Lemma \ref{pri-noi} and the boundedness of $\|\widetilde Y^h(t)\|_E$ imply that 
\begin{align*}
\E\Big[\|\widetilde Y^h(t)- Y^h(t)\|^{2p}\Big]
&\le C(X_0,T,p)h^{2\beta p}.
\end{align*}
Combining the strong error  estimates of $Y(t)-\widetilde Y^h(t)$, $\widetilde Y^h(t)-Y^h(t)$ and $Z(t)-Z^h(t)$ together, we finish the proof.
\end{proof}

\begin{rk}
Under the assumptions of Theorem \ref{strong}, if in addition  $X_0\in \HH^{\beta }$, then the term $t^{-\frac \beta 2}$ in the strong convergence rate result can be eliminated.
When $Q=I$,  the logarithmic factor in the strong error estimate can also be eliminated. We also remark that the approach to deduce strong convergence rates of numerical schemes is also available for SPDEs with  non-monotone coefficients (see e.g. \cite{CHS18a}).
\end{rk}

\begin{rk}\label{str-rk}
Assume that 
$\OO\subset \mathcal R^d, d\le 3$ is a bounded open domain with smooth boundary, 
$\{W(t)\}_{t\ge0}$ satisfies 
$\|A^{\frac {\beta-1}2}\|_{\LL_2^0}<\infty$ for some $\beta \in [\frac {(K-1)d}{2K},2]$, and $\sup_{t\in[0,T]}\|\int_0^tS(t-s)dW(s)\|_{L^{p_0}(\Omega,L^{2K^2})}<\infty$ for a sufficient large number $p_0\in \N^+$. Then for $X_0\in \HH^{\beta}$, $p\ge1$, it holds that 
\begin{align*}
\E\Big[\|X(t)-X^h(t)\|^p\Big]\le C(X_0,T,p)h^{\beta p}.
\end{align*}
In this case, 
the proof of the strong convergence rate result
does not rely on the a priori estimates of the finite element method. The key ingredients lie on using the Sobolev embedding $\HH^{\gamma} \hookrightarrow L^{2K}, \gamma \in [\frac {(K-1)d}{2K}, 1]$ and the dissipativity of $-A$ and $-A_h$.
\end{rk}

\section{Regularity of Kolmogorov equation and weak convergence rate}
\label{sec-wea}

\subsection{Regularity of Kolmogorov equation}
Denote $\mathcal C^2_b(\HH):=\mathcal C^2_b(\HH,\R)$.  Set $U(t,x)=\E[\phi(X(t,x))]$, then formally, $U$
is the solution of the Kolmogorov equation associated with Eq. \eqref{spde}:
\begin{align*}
\frac {\partial U(t,x)}{\partial t}
=\<-Ax+F(x),D U(t,x)\>+\text{tr}[Q^{\frac 12}D^2U(t,x)Q^{\frac 12}].
\end{align*}
To give rigorous meaning of the Kolmogorov equation, we follow the approach in \cite{BG18b}. We first apply the splitting strategy inspired by  \cite{BCH18,BG18} to  regularize the original equation  and get a regularized 
Kolmogorov equation.
Then making use of the regularity of the regularized 
Kolmogorov equation and  integration by parts formula in Malliavin sense, we obtain the weak convergence rate of the finite element method.

Now, we are in a position to give the rigorous meaning of the regularized Kolmogorov equation and
regularity estimates of $D U$ and $D^2U$.
The following lemma   is useful in constructing the 
regularized PDE and its corresponding Kolmogorov equation. For a function $f$ on $\R$, we denote  the first derivative and second derivative by $f'$ 
and $f''$.

\begin{lm}\label{ode-lm}
Let $L_f>0$, $K\in \N^+$ and $f:\R\to \R$ satisfy 
\begin{align*}
|f(\xi)|&\le L_f(1+|\xi|^K), \quad |f'(\xi)|
\le L_f(1+|\xi|^{K-1}),\\
f'(\xi)& \le L_f,\quad |f''(\xi)|
\le L_f(1+|\xi|^{(K-2)\lor0}).
\end{align*} 
Then the phase flow  $\Phi_{t}$
of the  differential equation 
\begin{align}\label{ode}
dx(t)=f(x(t))dt,\quad x(0)=\xi \in \R,
\end{align} 
satisfies for all $\xi \in \R,$
\begin{align*}
|\Phi_t(\xi)|\le C(f,t)(1+|\xi|),\quad \Phi_t'(\xi)\le C(f,t), \quad |\Phi_t''(\xi)|\le C(f,t)(1+|\xi|^{(K-2)\lor 0}). 
\end{align*}
\end{lm}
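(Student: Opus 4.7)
The proof splits naturally into the three claimed bounds, handled in order since each uses the previous.

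For the linear growth bound on $\Phi_t$, I plan to use a standard energy/Gronwall argument driven by the one-sided Lipschitz condition $f'(\xi)\le L_f$. Setting $g(\xi)=f(\xi)-L_f\xi$, monotonicity of $g$ yields $\xi(f(\xi)-f(0))\le L_f\xi^2$ for all $\xi\in\R$ (checking the two sign cases of $\xi$). Hence
\begin{align*}
\frac{d}{dt}|\Phi_t(\xi)|^2
=2\Phi_t(\xi)f(\Phi_t(\xi))
\le (2L_f+1)|\Phi_t(\xi)|^2 + |f(0)|^2,
\end{align*}
and Gronwall's inequality gives $|\Phi_t(\xi)|\le C(f,t)(1+|\xi|)$.

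For the first-derivative bound I would differentiate \eqref{ode} in $\xi$. Writing $\eta(t):=\Phi_t'(\xi)$, one obtains the linear variational equation $\eta'(t)=f'(\Phi_t(\xi))\eta(t)$, $\eta(0)=1$, whose explicit solution is $\eta(t)=\exp\bigl(\int_0^t f'(\Phi_s(\xi))\,ds\bigr)$. Here the one-sided Lipschitz assumption $f'\le L_f$ is crucial: it gives $\eta(t)\le e^{L_f t}=C(f,t)$, whereas only the pointwise polynomial bound $|f'|\le L_f(1+|\xi|^{K-1})$ would let $\eta(t)$ blow up. Note this also yields $\Phi_t'(\xi)>0$ (so the sign is automatic) by the exponential form.

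For the second-derivative bound, differentiating once more gives
\begin{align*}
\zeta'(t)=f''(\Phi_t(\xi))\eta(t)^2+f'(\Phi_t(\xi))\zeta(t),\qquad \zeta(0)=0,
\end{align*}
where $\zeta(t):=\Phi_t''(\xi)$. Variation of constants against the same one-sided-Lipschitz linear part yields
\begin{align*}
\zeta(t)=\int_0^t\exp\!\Bigl(\int_s^t f'(\Phi_u(\xi))\,du\Bigr)f''(\Phi_s(\xi))\,\eta(s)^2\,ds.
\end{align*}
Bounding the exponential by $e^{L_f(t-s)}$, $\eta(s)^2$ by the second step, and using $|f''(\Phi_s(\xi))|\le L_f(1+|\Phi_s(\xi)|^{(K-2)\lor 0})\le C(f,s)(1+|\xi|^{(K-2)\lor 0})$ from the first step, one obtains the claimed bound $|\Phi_t''(\xi)|\le C(f,t)(1+|\xi|^{(K-2)\lor 0})$.

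The main subtlety I anticipate is only in the first step: getting linear rather than exponential-in-polynomial growth of $\Phi_t$, which requires using $f'\le L_f$ rather than the weaker pointwise polynomial bound on $f$. Everything afterwards is a routine application of the variational ODE for the flow plus the bounds on $f'$ and $f''$; the polynomial dependence on $|\xi|$ in the last bound enters solely through $|f''|$ at the trajectory.
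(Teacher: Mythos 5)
Your proposal is correct and follows essentially the same route as the paper: a one-sided-Lipschitz/Gronwall energy argument for $|\Phi_t(\xi)|$, and the first and second variational equations combined with $f'\le L_f$ for the derivative bounds. The only (cosmetic) difference is that you solve the linear variational equations explicitly via an integrating factor and variation of constants, whereas the paper bounds them by Gronwall's inequality and an energy estimate; both give the same constants up to equivalence.
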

\begin{proof}
From the properties of $f$ and the Young inequality, 
it follows that
\begin{align*}
|x(t)|^2
&= |\xi|^2+\int_0^t 2(f(x(s))-f(0))x(s)ds+\int_0^t 2f(0)x(s)ds\\
&\le |\xi|^2+ \int_0^t(L_f^2+(1+2L_f) |x(s)|^2)ds.
\end{align*}
Then  Gronwall's inequality implies that
$|\Phi_t(\xi)|=|x(t)|\le C(f,t)(1+|\xi|)$.
Similarly, using the differentiable dependence on initial data, we obtain 
\begin{align*}
\Phi_t'(\xi)&= 1+\int_0^t f'(\Phi_s(\xi))\Phi_s'(\xi)ds,
\end{align*}
which, together with  Gronwall's inequality, yields that 
$0\le \Phi_t'(\xi)\le e^{L_ft}.$
Similar arguments lead to
\begin{align*}
|\Phi_t''(\xi)|^2&\le 2\int_0^t f'(\Phi_s(\xi))|\Phi_s''(\xi)|^2ds
+2\int_0^t f''(\Phi_s(\xi))(\Phi_s'(\xi))^2\Phi_s''(\xi)ds\\
&\le \int_0^t(2L_f+1)|\Phi_s''(\xi)|^2ds
+\int_0^t |f''(\Phi_s(\xi))|^2|\Phi_s'(\xi)|^4ds,
\end{align*}
which indicates that $|\Phi_t''(\xi)|\le C(f,t)(1+|\xi|^{(K-2)\lor0}).$
\end{proof}

With the help of  Lemma \ref{ode-lm}, we introduce our regularizing procedures. Based on the strategy of splitting method in \cite{BG18}, we split the Eq. \eqref{spde} into
two sub-systems
\begin{align*}
dX_1=F(X_1)dt,\quad 
dX_2=-AX_2dt+dW(t).
\end{align*}
Then given a fixed time step size $\delta t>0$, the splitting method is defined as
\begin{align*}
\widetilde X_{n+1}&:=S(\delta t)\Phi_{\delta t}(\widetilde X_n)
+\int_{t_n}^{t_{n+1}}S(t_{n+1}-s)dW(s)\\
 &=S_{\delta t}\widetilde X_n+\delta t S_{\delta t} \Psi_{\delta t}(\widetilde X_n)
+\int_{t_n}^{t_{n+1}}S(t_{n+1}-s)dW(s),
\end{align*}
 for $ 0\le n \le N-1, N\delta t=T, t_n=n \delta t $
where $\Psi_{t}(x):=\frac {\Phi_t(x)-x}t$, $t>0$
and $\Psi_{0}(x)=F(x)$. 
Notice that the splitting method can be used to approximate SPDE with non-monotone coefficients in strong and weak convergence senses (see, e.g., \cite{CH17, CHLZ17}). 
Based on the idea that $\{\widetilde X_{n}\}_{n=1,\cdots,N}$ is the exponential Euler method applied to SPDE in \cite{BCH18,BG18}, we introduce the auxiliary problem as
\begin{align}\label{rspde}
dX^{\delta t}+AX^{\delta t}dt=\Psi_{\delta t}(X^{\delta t})dt
+dW(t),\quad 
X^{\delta t}(0)=X_0.
\end{align}

The differentiability of $\Psi_{\delta t}$ is listed on 
the following lemma, which generalizes the case in \cite[Lemma 2.1]{BG18}.

\begin{lm}\label{psi}
Under the conditions of Lemma \ref{ode-lm},
for $\delta t_0 \in (0,1]$, there exists $C(\delta t_0,f)>0$ such that for all $\delta t\in [0,\delta t_0]$ and $\xi\in \R$,
\begin{align*}
\Psi_{\delta t}'(\xi)&\le e^{C\delta t_0},\qquad \qquad\qquad\qquad\qquad\quad
|\Psi_{\delta t}'(\xi)|\le C(\delta t_0)(1+|\xi|^{K-1}),\\
|\Psi_{\delta t}''(\xi)|&\le C(\delta t_0)(1+|\xi|^{(2K-3)\lor (K-1)}),\quad 
|\Psi_{\delta t}(\xi)-\Psi_{0}(\xi)|\le C(\delta t_0)\delta t
(1+|\xi|^{2K-1}).
\end{align*}
\end{lm}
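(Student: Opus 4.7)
The main tool is the fundamental integral representation
$\Phi_t(\xi) = \xi + \int_0^t f(\Phi_s(\xi))\,ds$, which yields
\[
\Psi_t(\xi) \;=\; \frac{1}{t}\int_0^t f(\Phi_s(\xi))\,ds.
\]
This identity makes the apparent $1/t$ singularity at $t=0$ harmless, since the integrand extends continuously to $s=0$ with value $f(\xi)=\Psi_0(\xi)$. All four estimates will follow by differentiating this representation in $\xi$ and applying the pointwise bounds on $\Phi_t,\Phi_t',\Phi_t''$ provided by Lemma \ref{ode-lm}.

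\textbf{First derivative.} Differentiating under the integral, and using the variational equation $\tfrac{d}{ds}\Phi_s'(\xi) = f'(\Phi_s(\xi))\Phi_s'(\xi)$, I obtain the telescoped identity
\[
\Psi_t'(\xi) \;=\; \frac{1}{t}\int_0^t f'(\Phi_s(\xi))\Phi_s'(\xi)\,ds \;=\; \frac{\Phi_t'(\xi)-1}{t}.
\]
The one-sided bound $f'\le L_f$ gives $\Phi_t'(\xi)\le e^{L_f t}$, hence
$\Psi_t'(\xi)\le (e^{L_f t}-1)/t \le L_f e^{L_f\delta t_0}$, which yields the first inequality. For the pointwise absolute-value bound I do \emph{not} use the telescoped form (which loses sign information) but rather estimate the integral directly: combining $|f'(\eta)|\le L_f(1+|\eta|^{K-1})$ with $|\Phi_s(\xi)|\le C(1+|\xi|)$ and $|\Phi_s'(\xi)|\le e^{L_f s}$ from Lemma \ref{ode-lm} produces $|\Psi_t'(\xi)|\le C(\delta t_0)(1+|\xi|^{K-1})$.

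\textbf{Second derivative.} Differentiating the first-derivative integral representation once more gives
\[
\Psi_t''(\xi) \;=\; \frac{1}{t}\int_0^t \bigl[f''(\Phi_s(\xi))(\Phi_s'(\xi))^2 + f'(\Phi_s(\xi))\Phi_s''(\xi)\bigr]\,ds.
\]
Inserting $|f''(\eta)|\le L_f(1+|\eta|^{(K-2)\vee 0})$, $|\Phi_s'|\le e^{L_f s}$, $|\Phi_s(\xi)|\le C(1+|\xi|)$, and $|\Phi_s''(\xi)|\le C(1+|\xi|^{(K-2)\vee 0})$, the first summand is bounded by $C(1+|\xi|^{(K-2)\vee 0})$ while the second is bounded by $C(1+|\xi|^{K-1})(1+|\xi|^{(K-2)\vee 0})\le C(1+|\xi|^{(2K-3)\vee(K-1)})$, which dominates the first and gives the stated bound.

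\textbf{Consistency in $\delta t$.} Writing
\[
\Psi_{\delta t}(\xi) - \Psi_0(\xi) \;=\; \frac{1}{\delta t}\int_0^{\delta t}\bigl(f(\Phi_s(\xi)) - f(\xi)\bigr)\,ds,
\]
and using $\tfrac{d}{dr}f(\Phi_r(\xi)) = f'(\Phi_r(\xi))f(\Phi_r(\xi))$ together with the polynomial bounds on $f$ and $f'$, I get $|f(\Phi_s(\xi)) - f(\xi)|\le C s(1+|\xi|^{2K-1})$, and a further integration yields $|\Psi_{\delta t}(\xi)-\Psi_0(\xi)|\le C(\delta t_0)\,\delta t\,(1+|\xi|^{2K-1})$.

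\textbf{Main obstacle.} The conceptual difficulty is exactly the apparent $1/\delta t$ singularity at $\delta t=0$, since $\Psi_{\delta t}$ is defined as a difference quotient. The integral representations convert this into an averaging of bounded functions over $[0,\delta t]$, so uniformity on $[0,\delta t_0]$ follows provided Lemma \ref{ode-lm} gives polynomial-in-$\xi$ control on $\Phi_s,\Phi_s',\Phi_s''$ that is uniform in $s\in[0,\delta t_0]$. Book-keeping the exponents (so that $(K-1)+(K-2)\vee 0$ matches $(2K-3)\vee(K-1)$) is the only subtlety, and it reduces to checking the cases $K=1$ and $K\ge 2$ separately.
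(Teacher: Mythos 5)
Your proof is correct and follows essentially the same route as the paper: represent $\Psi_{\delta t}$, $\Psi_{\delta t}'$, $\Psi_{\delta t}''$ as averages over $[0,\delta t]$ of the integrands coming from the variational equations for $\Phi_s$, $\Phi_s'$, $\Phi_s''$, and then insert the pointwise polynomial bounds of Lemma \ref{ode-lm}. The only (immaterial) deviation is in the consistency estimate, where you bound $f(\Phi_s(\xi))-f(\xi)$ via $\tfrac{d}{dr}f(\Phi_r)=f'(\Phi_r)f(\Phi_r)$ whereas the paper uses the mean value form $\int_0^1 f'(\theta\Phi_s(\xi)+(1-\theta)\xi)\,d\theta\,(\Phi_s(\xi)-\xi)$; both yield the same $\delta t\,(1+|\xi|^{2K-1})$ bound.
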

\begin{proof}
By the definition of $\Psi_{\delta t}$ and the properties of $\Phi_{\delta t}$ in Lemma \ref{ode-lm},
we have 
\begin{align*}
\Psi_{\delta t}'(\xi)&=\frac {\Phi_{\delta t}'(\xi)-1}{\delta t}=\frac {\int_0^{\delta t} f'(\Phi_s(\xi))\Phi_s'(\xi)ds}{\delta t}\le  C(f,\delta t_0),\\
|\Psi_{\delta t}'(\xi)|&\le \frac {|\int_0^{\delta t} f'(\Phi_s(\xi))\Phi_s'(\xi)ds|}{\delta t}
\le C(f, \delta t_0)\sup_{s\in[0,\delta t]}(1+|\Phi_s(\xi)|^{K-1})\\
&\le  C(f, \delta t_0)(1+|\xi|^{K-1}),\\
|\Psi_{\delta t}''(\xi)|&\le 
 \frac {|\int_0^{\delta t} f''(\Phi_s(\xi))(\Phi_s'(\xi))^2+f'(\Phi_s(\xi))\Phi_s''(\xi)ds|}{\delta t}\\
 &\le C(f,\delta t_0)(1+|\xi|^{(2K-3)\lor (K-1)}),\\ 
 |\Psi_{\delta t}(\xi)-\Psi_{0}(\xi)|&\le 
 \frac {|\int_0^{\delta t} \int_0^1 f'(\theta \Phi_{s}(\xi)+(1-\theta)\xi)(\Phi_{s}(\xi)-\xi) d\theta  ds|}{\delta t}\\
 &\le  \sup_{s\in[0,\delta t]}\int_0^1|f'(\theta \Phi_{s}(\xi)+(1-\theta)\xi)|d\theta  \sup_{s\in[0,\delta t]}|(\Phi_{s}(\xi)-\xi)| \\
 &\le C(f,\delta t_0)\delta t(1+|\xi|^{2K-1}).
 \end{align*}
\end{proof}

Based on Lemma \ref{psi}, the coefficient $\Psi_{\delta t}(\cdot)$ of  Eq. \eqref{rspde} is globally Lipschitz due to the fact that $\Psi_{t}(\xi)=\frac {\Phi_t(\xi)-\xi}t$. However, the Lipschitz coefficients of $\Psi_{t}, t\ge 0$ are not uniformly bounded with respect to $t$ (see, e.g., \cite{BG18b}). Indeed,  the solution of Eq. \eqref{rspde} is strongly convergent to that of Eq. \eqref{spde},
whose  proof is similar  to \cite[Proposition 4.8]{BG18}.

\begin{lm}\label{spl}
Let  Assumptions \ref{as-lap}-\ref{as-dri} hold.
Then the solution $X^{\delta t}$ of Eq. \eqref{rspde} is strongly convergent to the solution
$X$ of Eq. \eqref{spde} and satisfies, for any $p\ge 1$,
\begin{align*}
&\E\Big[\|X^{\delta t}(t)\|_E^p\Big]\le C(T,p)(1+\|X_0\|^p_{E}),\\
&\Big\|\sup_{t\in [0,T]} \|X^{\delta t}(t)-X(t)\|\Big\|_{L^p(\Omega)} \le C(X_0,T,p)\delta t.
\end{align*} 
\end{lm}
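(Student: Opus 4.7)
The plan is to establish the two assertions separately: first the uniform $E$-moment bound on $X^{\delta t}$, and then the strong convergence rate by a pathwise energy estimate combined with a Gronwall argument.

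\textbf{Step 1: $E$-moment bound on $X^{\delta t}$.} I would decompose $X^{\delta t}=Y^{\delta t}+Z$, where $Z$ is the stochastic convolution $dZ+AZ\,dt=dW$, $Z(0)=0$ (the same object appearing in Lemma \ref{exa}), and $Y^{\delta t}$ solves the random PDE
\begin{align*}
\frac{dY^{\delta t}}{dt}+AY^{\delta t}=\Psi_{\delta t}(Y^{\delta t}+Z),\qquad Y^{\delta t}(0)=X_0.
\end{align*}
The crucial point from Lemma \ref{psi} is that $\Psi_{\delta t}'(\xi)\le e^{C\delta t_0}$ uniformly in $\delta t\in[0,\delta t_0]$, so $\Psi_{\delta t}$ is one-sided Lipschitz with a constant independent of $\delta t$, and also has polynomial growth of degree $K$. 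Consequently the arguments used for $X$ in Lemma \ref{exa} (taken from \cite{DZ14,BCH18}) apply verbatim to $X^{\delta t}$: testing the $Y^{\delta t}$-equation against $Y^{\delta t}$ gives an $H^1$-energy estimate, and the mild-form bootstrap via Lemmas \ref{smo0}--\ref{smo} together with the Gagliardo--Nirenberg inequality (as in Proposition \ref{pri-fem}) promotes this to the $E$-norm bound $\E[\|X^{\delta t}(t)\|_E^p]\le C(T,p)(1+\|X_0\|_E^p)$, with constants independent of $\delta t$.

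\textbf{Step 2: Strong convergence.} Set $e^{\delta t}(t):=X^{\delta t}(t)-X(t)$, which satisfies $e^{\delta t}(0)=0$ and pathwise
\begin{align*}
\frac{d}{dt}e^{\delta t}+Ae^{\delta t}=\bigl[\Psi_{\delta t}(X^{\delta t})-\Psi_{\delta t}(X)\bigr]+\bigl[\Psi_{\delta t}(X)-F(X)\bigr],
\end{align*}
since $\Psi_0=F$. Taking the $\HH$-inner product with $e^{\delta t}$ and integrating, the dissipativity of $A$ yields a nonpositive $-\|\nabla e^{\delta t}\|^2$ term, the one-sided Lipschitz estimate $\Psi_{\delta t}'\le e^{C\delta t_0}$ controls the first bracket via $\langle\Psi_{\delta t}(X^{\delta t})-\Psi_{\delta t}(X),e^{\delta t}\rangle\le C\|e^{\delta t}\|^2$, and the consistency bound $|\Psi_{\delta t}(\xi)-\Psi_0(\xi)|\le C\delta t(1+|\xi|^{2K-1})$ from Lemma \ref{psi}, together with Cauchy--Schwarz and Young, gives
\begin{align*}
\frac{d}{dt}\|e^{\delta t}\|^2\le C\|e^{\delta t}\|^2+C(\delta t)^2\bigl(1+\|X\|_E^{2(2K-1)}\bigr).
\end{align*}
Gronwall's inequality then produces $\sup_{t\in[0,T]}\|e^{\delta t}(t)\|^2\le C(T)(\delta t)^2\int_0^T(1+\|X(s)\|_E^{2(2K-1)})\,ds$, and taking the $L^{p/2}(\Omega)$-norm and invoking the $E$-moment bound on $X$ from Lemma \ref{exa} yields $\|\sup_{t\in[0,T]}\|e^{\delta t}(t)\|\|_{L^p(\Omega)}\le C(X_0,T,p)\delta t$.

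\textbf{Main obstacle.} The delicate ingredient is the high polynomial degree $2K-1$ appearing in the consistency estimate for $\Psi_{\delta t}-\Psi_0$, which forces us to rely on arbitrarily high $E$-moments of $X$ and $X^{\delta t}$. Fortunately these are furnished by Lemma \ref{exa} and Step 1 above, so the polynomial growth is harmless. A secondary subtlety is that the proof must remain pathwise so as to avoid any stochastic It\^o correction; this is automatic because the noise cancels in the difference equation for $e^{\delta t}$, reducing the argument to a deterministic Gronwall estimate with random data.
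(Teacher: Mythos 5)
The paper gives no proof of this lemma; it simply defers to \cite[Proposition 4.8]{BG18}. Your Step 2 is a complete and correct version of the standard argument: the noise cancels in the difference equation, the uniform one-sided Lipschitz bound $\Psi_{\delta t}'\le e^{C\delta t_0}$ from Lemma \ref{psi} handles the first bracket, the consistency bound $|\Psi_{\delta t}(\xi)-\Psi_0(\xi)|\le C\delta t(1+|\xi|^{2K-1})$ handles the second (using boundedness of $\OO$ to pass from the pointwise bound to the $\HH$-norm via $\|X\|_E$), and Gronwall plus the arbitrarily high $E$-moments of $X$ from Lemma \ref{exa} close the estimate. This is in the same spirit as the cited reference, so Step 2 needs no further comment.

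Step 1 has a repairable but genuine defect. You invoke Lemmas \ref{smo0}--\ref{smo}, which are statements about the \emph{discrete} semigroup $S^h(t)P^h$, where you need the corresponding (classical) $L^2\to L^\infty$ smoothing and maximum principle for $S(t)$; that is only a citation slip. More substantively, the Gagliardo--Nirenberg bootstrap ``as in Proposition \ref{pri-fem}'' is exactly the step that forces the restriction $K<5$ in that proposition (the power $\|\nabla Y\|^{(K-1)/2}$ must be absorbed by the $L^2$-in-time bound on $\|\nabla Y\|$ via H\"older), whereas Lemma \ref{spl} is stated for all $K\in\N^+$ under Assumptions \ref{as-lap}--\ref{as-dri}. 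To get the moment bound in full generality you should instead mimic the route the paper uses for Lemma \ref{exa}, i.e.\ the $E$-dissipativity argument of \cite[Theorem 7.7]{DZ14} applied to the random PDE for $Y^{\delta t}=X^{\delta t}-Z$: since $\Psi_{\delta t}$ satisfies the one-sided condition $\Psi_{\delta t}'\le e^{C\delta t_0}$ and the polynomial growth bound with constants uniform in $\delta t\in[0,\delta t_0]$, that argument transfers verbatim and yields $\E[\|X^{\delta t}(t)\|_E^p]\le C(T,p)(1+\|X_0\|_E^p)$ with no restriction on $K$. With Step 1 repaired in this way, your proof is correct.
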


The idea of deducing the sharp weak convergence rate
lies on the decomposition of $\E\Big[\phi(X(t))-\phi(X^h(t))\Big]$ into $\E\Big[\phi(X(t))-\phi(X^{\delta t}(t))\Big]$ and $\E\Big[\phi(X^{\delta t}(t))-\phi(X^h(t))\Big]$. The first term is estimated by Lemma \ref{spl} and possesses the strong  convergence rate with respect to the parameter $\delta t$. 
The second error is estimated  by  utilizing  the regularity of Kolmogorov equation with respect to Eq. \eqref{rspde} and
integration by parts in the  sense of  Malliavin calculus.
Similar to \cite{BG18b, CJK14}, to get the rigorous regularity result   of  the Kolmogorov equation, the noise term $dW(t)$  is  
regularized as $e^{\delta A}dW(t)$, $\delta >0$. For convenience, we omit the procedure of regularizing the noise since the following 
proposition allows us to take the  limit $\delta \to 0$.

Next, we give the regularity of Kolmogorov equation with respect to Eq. \eqref{rspde}
\begin{align}\label{kol}
\frac {\partial U^{\delta t}(t,x)} {\partial t}
=\LL^{\delta t} U^{\delta t}(t,x)
&=\<-Ax+\Psi_{\delta t}(x), D U^{\delta t}(t,x)\>\\\nonumber
&\quad+\frac 12\text{tr}[Q^{\frac 12}D^2U^{\delta t}(t,x)Q^{\frac 12}].
\end{align}

\begin{prop}\label{reg-kol}
Let $\phi\in \mathcal C_b^2(\HH)$.
For every $\alpha, \beta, \gamma \in [0,1)$, $\beta+\gamma<1$, there exist $C(T, \delta t_0,\alpha)$ and 
$C(T, \delta t_0,\beta, \gamma )$ such that for $\delta t\in [0,\delta t_0]$, $x\in E, y,z \in \HH$ and $t\in (0,T]$,
\begin{align}\label{kol-d1}
|DU^{\delta t}(t,x).y|&\le \frac {C(T, \delta t_0,\alpha)(1+|x|_{E}^{K-1})|\phi|_{\mathcal C_b^1}}{t^{\alpha}}\|A^{-\alpha}y\|, \\\label{kol-d2}
|D^2U^{\delta t}(t,x).(y,z)|&\le \frac {C(T, \delta t_0,\beta, \gamma )(1+|x|_{E}^{(5K-6)\lor(4K-4)})|\phi|_{\mathcal C_b^2}}{t^{\beta+\gamma}}\|A^{-\beta}y\|\|A^{-\gamma }z\|.
\end{align} 
\end{prop}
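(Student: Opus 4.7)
\textbf{Proof plan for Proposition \ref{reg-kol}.}

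The plan is to represent $U^{\delta t}$ probabilistically, differentiate with respect to the initial datum, and then control the resulting first and second variation processes by combining the smoothing of $S(t)$ with the two-sided/one-sided bounds on $\Psi'_{\delta t}$ and $\Psi''_{\delta t}$ from Lemma \ref{psi}. Since $\Psi_{\delta t}$ is globally Lipschitz and $\mathcal C^2$, the mapping $x\mapsto X^{\delta t}(t,x)$ is twice Fr\'echet differentiable and
\begin{align*}
DU^{\delta t}(t,x).y&=\E\big[\<D\phi(X^{\delta t}(t,x)),\eta^{x,y}(t)\>\big],\\
D^2U^{\delta t}(t,x).(y,z)&=\E\big[D^2\phi(X^{\delta t}(t,x)).(\eta^{x,y}(t),\eta^{x,z}(t))\big]+\E\big[\<D\phi(X^{\delta t}(t,x)),\zeta^{x,y,z}(t)\>\big],
\end{align*}
where $\eta^{x,y}$ solves the linearized equation $d\eta+A\eta\,dt=\Psi'_{\delta t}(X^{\delta t})\eta\,dt$ with $\eta(0)=y$, and $\zeta^{x,y,z}$ satisfies $d\zeta+A\zeta\,dt=\Psi'_{\delta t}(X^{\delta t})\zeta\,dt+\Psi''_{\delta t}(X^{\delta t})(\eta^{x,y},\eta^{x,z})\,dt$ with $\zeta(0)=0$. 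The statement \eqref{kol-d1}--\eqref{kol-d2} will follow once I show a smoothing bound for $\eta^{x,y}$ of the form $\|\eta^{x,y}(t)\|\le Ct^{-\alpha}(1+\|X^{\delta t}\|_{L^\infty([0,T];E)}^{K-1})\|A^{-\alpha}y\|$ and a corresponding bound for $\zeta^{x,y,z}$, and then take expectations using the moment bound $\E[\|X^{\delta t}\|_E^p]\le C(T,p)(1+\|x\|_E^p)$ from Lemma \ref{spl}.

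For the first variation, I would first use the one-sided estimate $\Psi_{\delta t}'\le e^{C\delta t_0}$ from Lemma \ref{psi} and the monotonicity of $-A$ to obtain the uniform (unweighted) bound $\|\eta^{x,y}(s)\|\le e^{Cs}\|y\|$ via energy estimate. Second, from the mild formulation
\begin{align*}
\eta^{x,y}(t)=S(t)y+\int_0^t S(t-s)\Psi_{\delta t}'(X^{\delta t}(s))\eta^{x,y}(s)\,ds,
\end{align*}
the analyticity of $S(t)$ provides $\|S(t)y\|\le Ct^{-\alpha}\|A^{-\alpha}y\|$, and the polynomial growth $|\Psi_{\delta t}'(\xi)|\le C(1+|\xi|^{K-1})$ together with a Gronwall argument (with the integrable singular kernel $s^{-\alpha}$, $\alpha<1$) upgrades this to the singular smoothing bound $\|\eta^{x,y}(t)\|\le Ct^{-\alpha}(1+\|X^{\delta t}\|_{L^\infty([0,T];E)}^{K-1})\|A^{-\alpha}y\|$. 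Plugging into the formula for $DU^{\delta t}$ and using $|\phi|_{\mathcal C_b^1}$ yields \eqref{kol-d1}.

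For the second variation, the mild form of $\zeta^{x,y,z}$ together with the smoothing of $S(t-s)$ and $|\Psi''_{\delta t}(\xi)|\le C(1+|\xi|^{(2K-3)\lor (K-1)})$ gives
\begin{align*}
\|\zeta^{x,y,z}(t)\|\le \int_0^t\|S(t-s)\Psi'_{\delta t}(X^{\delta t})\zeta\|\,ds+\int_0^t\|S(t-s)\Psi''_{\delta t}(X^{\delta t})(\eta^{x,y},\eta^{x,z})\|\,ds.
\end{align*}
To handle the inhomogeneous term, I rewrite the product $\eta^{x,y}\cdot \eta^{x,z}$ in an appropriate negative-order Sobolev space, using the $d=1$ Sobolev embedding together with the improvement $\|S(t-s)\cdot\|\le C(t-s)^{-\sigma}\|\cdot\|_{\HH^{-2\sigma}}$, so that $S(t-s)$ absorbs the roughness of the product while producing the desired singular rate $(t-s)^{-(\beta+\gamma)}$. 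Applying the smoothing bound on $\eta^{x,y}$ and $\eta^{x,z}$ derived above (with exponents $\beta$ and $\gamma$ respectively), and then applying a singular Gronwall argument to close the estimate on $\zeta^{x,y,z}$, I obtain the bound $\|\zeta^{x,y,z}(t)\|\le C t^{-(\beta+\gamma)}(1+\|X^{\delta t}\|_{L^\infty([0,T];E)}^{(5K-6)\lor (4K-4)})\|A^{-\beta}y\|\|A^{-\gamma}z\|$. Combining these with the $D^2\phi$ term (which contributes $(K-1)+(K-1)=2K-2\le 5K-6$ for $K\ge 2$) and Lemma \ref{spl} gives \eqref{kol-d2}.

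The main obstacle is the second-derivative estimate: the bilinear source term $\Psi''_{\delta t}(X^{\delta t})(\eta^{x,y},\eta^{x,z})$ must be handled in a negative-order space so that the smoothing rate $(t-s)^{-(\beta+\gamma)}$ (rather than the naive $(t-s)^{-\beta}-(t-s)^{-\gamma}$ which is not integrable when $\beta+\gamma>1$ would be borderline) is obtained; matching this with the polynomial exponent exactly $(5K-6)\lor(4K-4)$ requires careful bookkeeping of the powers of $\|X^{\delta t}\|_E$ entering through $\Psi'_{\delta t}$ in each variation equation and through $\Psi''_{\delta t}$ in the source. The constraint $\beta+\gamma<1$ is precisely what keeps the singular Gronwall integrand integrable.
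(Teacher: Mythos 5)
Your overall architecture matches the paper's: the same representation of $DU^{\delta t}$ and $D^2U^{\delta t}$ through the first and second variation processes $\eta$ and $\zeta$, the same use of the moment bound on $X^{\delta t}$ from Lemma \ref{spl}, and, crucially, the same device for the bilinear source term $\Psi''_{\delta t}(X^{\delta t})\eta^y\eta^z$ (placing the product in a negative-order space via the $d=1$ embedding $\|A^{-\alpha}v\|\le C\|v\|_{L^1}$ for $\alpha>\tfrac14$ together with $\|\eta^y\eta^z\|_{L^1}\le\|\eta^y\|\,\|\eta^z\|$, with $\beta+\gamma<1$ keeping the resulting convolution integral finite).

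There is, however, a genuine gap in how you obtain the singular smoothing bounds for $\eta$ and $\zeta$. You propose the forward Duhamel formula $\eta(t)=S(t)y+\int_0^tS(t-s)\Psi'_{\delta t}(X^{\delta t}(s))\eta(s)\,ds$ closed by a (singular) Gronwall argument using the two-sided bound $|\Psi'_{\delta t}(\xi)|\le C(1+|\xi|^{K-1})$, and likewise a "singular Gronwall" to close the $\zeta$ estimate. Gronwall applied this way produces a factor $\exp\big(CT\sup_{t}\|\Psi'_{\delta t}(X^{\delta t}(t))\|_{E}\big)$, i.e.\ an exponential of $1+\sup_t\|X^{\delta t}(t)\|_E^{K-1}$. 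This is not the polynomial dependence $(1+|x|_{E}^{K-1})$ asserted in \eqref{kol-d1}, and after taking expectations it would require exponential moments of $\sup_t\|X^{\delta t}(t)\|_E^{K-1}$, which Lemma \ref{spl} does not provide and which fail in general for $K\ge 3$ since the stochastic convolution has Gaussian tails. Plugging your uniform energy bound $\|\eta(s)\|\le e^{Cs}\|y\|$ into the Duhamel integral instead of iterating does not repair this: it yields a term proportional to $\|y\|$ rather than $\|A^{-\alpha}y\|$. The paper's mechanism is different and is the crux of the proof: it works with the evolution family $V(t,s)$ generated by $-A+\Psi'_{\delta t}(X^{\delta t})$. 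The one-sided bound $\Psi'_{\delta t}\le e^{C\delta t_0}$ gives $\|V(t,s)\|_{\LL(\HH)}\le C(T,\delta t_0)$ by an energy estimate, and the decomposition $V(t,s)y=e^{-(t-s)A}y+\int_s^tV(t,r)\Psi'_{\delta t}(X^{\delta t}(r))e^{-(r-s)A}y\,dr$ then yields $\|V(t,s)y\|\le C(t-s)^{-\alpha}\sup_r\|\Psi'_{\delta t}(X^{\delta t}(r))\|_E\|A^{-\alpha}y\|$ with exactly one polynomial factor and no Gronwall; the same family gives $\zeta^{y,z}(t)=\int_0^tV(t,s)\big(\Psi''_{\delta t}(X^{\delta t}(s))\eta^y(s)\eta^z(s)\big)\,ds$ in closed form, so no Gronwall is needed for $\zeta$ either, and the exponent $(5K-6)\lor(4K-4)$ falls out as $3(K-1)+\big((2K-3)\lor(K-1)\big)$. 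You should replace your Gronwall steps with this argument.
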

\begin{proof}
Similar arguments in \cite[Theorem 4.1]{BG18b} prove  \eqref{kol-d1} and that
for $0\le \alpha<1$,
\begin{align}\label{var0}
\|\eta^y(t,x)\|\le \frac {C(T,\delta t_0,\alpha)}{t^{\alpha}}
\sup_{t\in [0,T]}\|\Psi'_{\delta t}(X^{\delta t}(t,x))\|_E\|A^{-\alpha}y\|,
\end{align}
where $\eta^{y}$ satisfies 
\begin{align*}
d\eta^y(t,x)&=(-A+\Psi_{\delta t}'(X^{\delta t}(t,x))\eta^y(t,x)dt, \; \eta^y(t,x)=y.
\end{align*}
Here we give a short  proof 
for \eqref{kol-d2} which is  different from the dual 
argument in \cite{BG18b}.
Notice that 
\begin{align*}
D^2U^{\delta t}(t,x).(y,z)&=
\E[D\phi(X^{\delta t}(t,x)).\zeta^{y,z}(t,x)]\\
&\quad+
\E[D^2\phi(X^{\delta t}(t,x)).(\eta^y(t,x), \eta^z(t,x))],
\end{align*}
where  $\zeta^{y,z}$ satisfies
\begin{align*}
d\zeta^{y,z}(t,x)&= (-A+\Psi_{\delta t}'(X^{\delta t}(t,x))\zeta^{y,z}(t,x)dt+\Psi''_{\delta t}(X^{\delta t}(t,x))\eta^y(t,x)\eta^z(t,x).
\end{align*}
Thus it suffices to prove the regularity of $\zeta^{y,z}$ thanks to \eqref{var0}.
Due to the fact that
\begin{align*}
\zeta^{y,z}(t,x)=\int_0^tV(t,s)\Big(\Psi''_{\delta t}(X^{\delta t}(s,x))\eta^y(s,x)\eta^z(s,x)\Big)ds,
\end{align*}
where
\begin{align*}
dV(t,s)z=(-A+\Psi'_{\delta t}(X^{\delta t}(t,x)))V(t,s)zdt, \quad V(s,s)z=z,
\end{align*}
we need to deduce more refined estimate of $V(t,s)z, \;0\le s<t\le T$.
The property of  $\Psi'_{\delta t}$ in Lemma \ref{psi}, combined with a energy estimate, yields that  $\|V(t,s)z\|^2\le C(T,\delta t_0) \|z\|^2$. 
Moreover, we claim that  for $0\le s<t \le T$, $0\le \alpha <1$,
\begin{align}\label{var}
\|V(t,s)y\|\le \frac{C(T,\delta t_0,\alpha) }{(t-s)^{\alpha}}\sup_{r\in[0,T]}\|\Psi'_{\delta t}(X^{\delta t}(r,x))\|_{E}\|A^{-\alpha }y\|.
\end{align}
Indeed, let $\widetilde V(t,s)y= V(t,s)y-e^{-(t-s)A}y, 0\le s\le t\le T$.
Then we have for $t>s$,
\begin{align*}
d\widetilde V(t,s)y&=(-A+\Psi'_{\delta t}(X^{\delta t}(t,x)))
\widetilde V(t,s)ydt+\Psi'_{\delta t}(X^{\delta t}(t,x))e^{-(t-s)A}ydt,\\
 \widetilde V(s,s)y&=0,
\end{align*}
and 
\begin{align*}
\|\widetilde V(t,s)y\|&\le 
\int_s^t\Big\|V(t,r)\Big(\Psi'_{\delta t }(X^{\delta t}(r,x))e^{-(r-s)A}y \Big)\Big\|dr\\
&\le C(T,\delta t_0)\sup_{r\in [0,T]} \|\Psi'_{\delta t }(X^{\delta t}(r,x))\|_E \int_s^t\|e^{-(r-s)A}y\|dr\\
&\le C(T,\delta t_0,\alpha)\sup_{r\in [0,T]} \|\Psi'_{\delta t }(X^{\delta t}(r,x))\|_E (t-s)^{1-\alpha}\|A^{-\alpha}y\|,
\end{align*}
which implies that the estimate \eqref{var} holds.
Now, we are in a position to prove \eqref{kol-d2}.
Based on \eqref{var} and Sobolev embedding theorem, we have for $\alpha >\frac 14$,
\begin{align*}
\|\zeta^{y,z}(t,x)\|&\le \int_0^t\|V(t,s)\Psi''_{\delta t}(X^{\delta t}(s,x))\eta^y(s,x)\eta^z(s,x)\|ds\\
&\le C(T,\delta t_0,\alpha )\int_0^t \sup_{r\in [0,T]} \|\Psi'_{\delta t }(X^{\delta t}(r,x))\|_E(t-s)^{-\alpha} \\
&\qquad\quad\Big\|A^{-\alpha}\Big(\Psi''_{\delta t}(X^{\delta t}(s,x))\eta^y(s,x)\eta^z(s,x)\Big)\Big\|ds\\
&\le C(T,\delta t_0,\alpha )\int_0^t \sup_{r\in [0,T]} \|\Psi'_{\delta t }(X^{\delta t}(r,x))\|_E(t-s)^{-\alpha}\|\Psi''_{\delta t}(X^{\delta t}(s,x))\|_E\\
&\qquad\|\eta^y(s,x)\|\|\eta^z(s,x)\|ds\\
&\le C(T,\delta t_0,\alpha)\sup_{r\in [0,T]}\|\Psi'_{\delta t }(X^{\delta t}(r,x))\|_E\sup_{r\in [0,T]}\|\Psi''_{\delta t}(X^{\delta t}(r,x))\|_E \\
&\qquad\int_0^t(t-s)^{-\alpha}\|\eta^y(s,x)\|\|\eta^z(s,x)\|ds.
\end{align*}
Now using the estimation  \eqref{var0}, the growth  of $\Psi_{\delta t}$, and stability of $X^{\delta t}$, we obtain
\begin{align*}
\E[\|\zeta^{y,z}(t,x)\|]
&\le C(T,\delta t_0,\alpha,\beta,\gamma)
\int_0^t(t-s)^{-\alpha}s^{-\beta-\gamma}ds
\|A^{-\beta}y\|\|A^{-\gamma}y\|\\
&\qquad\E\Big[\sup_{r\in [0,T]}\|\Psi'_{\delta t }(X^{\delta t}(r,x))\|_E^3\sup_{r\in [0,T]}\|\Psi''_{\delta t}(X^{\delta t}(r,x))\|_E\Big]\\
&\le C(T,\delta t_0,\alpha,\beta,\gamma)t^{-\beta-\gamma}(1+\|x\|_E^{(5K-6)\lor(4K-4)})
\|A^{-\beta}y\|\|A^{-\gamma}y\|,
\end{align*}
which completes the proof.
\end{proof}
\begin{rk}
The Sobolev embedding inequality 
$\|y\|_{L^{\infty}}\le C\|y\|_{\HH^{\frac d2+\epsilon}}$, $y\in \HH^{\frac d2+\epsilon}$, $\epsilon>0$, $d\le 3$,  yields that 
$\|A^{-\frac d2-\epsilon} y\|\le C\|y\|_{L^1}$.
Thus the regularity result of Kolmogorov equation in Proposition \ref{reg-kol} can be generalized to the higher dimensional case (d=2,3) and more regular noise case.
\end{rk}

\subsection{Weak convergence rate}
Before  studying  the weak convergence rate, 
we show that the numerical solution $X^h$ is differentiable in   Malliavin sense and  prove some estimates of $X^h$ needed later similar to \cite[Lemma 3.1]{AL16}. 

\begin{prop}\label{mal-dif}
Let  Assumptions \ref{as-lap}-\ref{as-dri} hold.
Then the Malliavin derivative of $X^h$ satisfies,  for some constant $C(T,\beta,X_0,K)$,
\begin{align*}
\E\Big[\|A_h^{\frac {\beta-1} 2}\mathcal D_sX^h(t)\|^2_{\LL_2^0}\Big]\le C(T,\beta,X_0,K)\big(1+\log(\frac 1h)\big)^{2(K-1)K^2}, \quad  0\le s\le t\le T.
\end{align*}
\end{prop}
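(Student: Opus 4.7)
The plan is to derive the linear random PDE for the Malliavin derivative of $X^h$, multiply by $A_h^{(\beta-1)/2}$, and combine a Gronwall-type closure with the polynomial-in-$\log(1/h)$ moment bound of $\|X^h\|_E$ from Proposition~\ref{pri-fem}. Since the noise in \eqref{sge} is additive, $\mathcal{D}_sW(t)=I_{U_0}\mathbf{1}_{\{s\le t\}}$, so applying $\mathcal{D}_s$ to the mild form of \eqref{sge} shows that for $t\ge s$, the Malliavin derivative $\mathcal{D}_sX^h(t)$, viewed as a Hilbert--Schmidt operator from $U_0$ into $V_h$, satisfies
\begin{equation*}
\mathcal{D}_sX^h(t) = S^h(t-s)P^h + \int_s^t S^h(t-r)P^h F'(X^h(r))\mathcal{D}_sX^h(r)\,dr.
\end{equation*}

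Multiplying by $A_h^{(\beta-1)/2}$ from the left (which commutes with $S^h$) and taking the $\LL_2^0$-norm, the initial term is bounded by $C\|A^{(\beta-1)/2}\|_{\LL_2^0}$ thanks to the norm equivalence \eqref{eq-norm}, the contractivity of $S^h$, and the $\HH^{\beta-1}$-stability of $P^h$. For the Volterra integral, I use the growth $|f'(\xi)|\le L_f(1+|\xi|^{K-1})$ and a factorization that pushes $A_h^{(\beta-1)/2}$ onto the Malliavin factor, which controls the integrand by $C(1+\|X^h(r)\|_E^{K-1})\,\Xi(r)$ with $\Xi(r):=\|A_h^{(\beta-1)/2}\mathcal{D}_sX^h(r)\|_{\LL_2^0}$. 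This yields the random Gronwall-type inequality
\begin{equation*}
\Xi(t)\le C + C\int_s^t\bigl(1+\|X^h(r)\|_E^{K-1}\bigr)\Xi(r)\,dr,
\end{equation*}
from which the classical Gronwall lemma gives $\Xi(t)\le C\exp\!\bigl(CT(1+\sup_{r\in[0,T]}\|X^h(r)\|_E^{K-1})\bigr)$.

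To conclude, I square both sides and take expectations. Expanding $\exp(\cdot)$ in Taylor series, the $n$-th term is controlled using the a priori bound from Proposition~\ref{pri-fem}, namely $\E[\sup_{r\in[0,T]}\|X^h(r)\|_E^p]\le C(p)(1+\log(1/h))^{K^2p/2}$ with $p=n(K-1)$. Summing over $n$ with the factorial denominator and balancing the polynomial growth rate in $\log(1/h)$ against $n!$ produces the target bound $(1+\log(1/h))^{2(K-1)K^2}$, where the exponent matches $2(K-1)$ copies of the $K^2$-exponent in the moment estimate of $\|X^h\|_E$.

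The chief obstacle lies in the last step: polynomial-in-$\log(1/h)$ moments of $\|X^h\|_E^{K-1}$ do not, by themselves, yield a polylogarithmic bound on $\E[\exp(c\sup_r\|X^h\|_E^{K-1})]$, since the latter can grow polynomially in $h^{-1}$. I anticipate the correct treatment either truncates the Taylor expansion at a carefully optimized index (trading the tail for a remainder that still enjoys polynomial moments), or bypasses the exponential Gronwall altogether via a finite Picard iteration of the Volterra mild form combined with H\"older in time. An auxiliary technical point is the Sobolev-type verification that the operator $A_h^{(\beta-1)/2}P^h F'(X^h(r))$ can be factored to act on a weighted copy of $\mathcal{D}_sX^h(r)$ without incurring an inverse-mesh factor; this requires using the $\HH^{\beta-1}$-stability of $P^h$ together with the equivalence~\eqref{eq-norm} rather than crude inverse inequalities on $V_h$.
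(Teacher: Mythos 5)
Your derivation of the mild form for $\mathcal D_sX^h$ matches the paper's starting point, but the Gronwall closure you choose is the wrong one, and you have in fact put your finger on exactly why: feeding the polynomial bound $|f'(\xi)|\le L_f(1+|\xi|^{K-1})$ into the Volterra kernel produces $\Xi(t)\le C\exp\bigl(CT(1+\sup_r\|X^h(r)\|_E^{K-1})\bigr)$, and Proposition \ref{pri-fem} only provides polynomial-in-$\log(1/h)$ moments of $\|X^h\|_E$, which cannot control an exponential moment. Neither of your proposed rescues works: truncating the Taylor series of the exponential leaves a remainder that is still exponential in $\sup_r\|X^h(r)\|_E^{K-1}$, and a finite Picard iteration of the Volterra equation accumulates one factor of $\sup_r(1+\|X^h(r)\|_E^{K-1})$ per step, so the iteration must be run to convergence and you recover the same exponential. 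The gap is genuine.

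The paper closes it by exploiting the one-sided Lipschitz condition where you use the two-sided growth bound. Write $\widetilde\eta_s(t,y)=\mathcal D_sX^h(t)y-S^h(t-s)P^hy$ and represent it by variation of constants, $\widetilde\eta_s(t,y)=\int_s^t\widehat V(t,r)P^h\bigl(DF(X^h(r))S^h(r-s)P^hy\bigr)dr$, where $\widehat V(t,r)$ is the two-parameter propagator of the \emph{homogeneous} linearized equation $d\widehat V z=-A_h\widehat Vz\,dt+P^h(DF(X^h)\widehat Vz)\,dt$. An energy estimate for $\widehat V$ uses only $\langle DF(X^h)w,w\rangle=\int f'(X^h)w^2\,d\xi\le L_f\|w\|^2$, i.e.\ the bound $f'\le L_f$, and yields $\|\widehat V(t,r)z\|\le C(T)\|z\|$ with \emph{no} dependence on $\|X^h\|_E$; the polynomial growth of $f'$ then enters only through the source term $DF(X^h(r))S^h(r-s)P^hy$, hence linearly (degree $K-1$) rather than exponentially. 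A second pass through the mild form upgrades this to $\|A^{-\gamma}\mathcal D_s^yX^h(t)\|\le C(1+\sup_r\|X^h(r)\|_E^{2K-2})\|A^{-\gamma}y\|$ for $0\le\gamma\le\frac12$, and after squaring and summing over $y=Q^{1/2}e_i$ one needs only the $4(K-1)$-th moment of $\|X^h\|_E$, which Proposition \ref{pri-fem} bounds by $(1+\log(\frac1h))^{2(K-1)K^2}$ — this is precisely where the stated exponent comes from. A secondary advantage of this route is that it never has to commute $A_h^{(\beta-1)/2}$ past the multiplication operator $F'(X^h(r))$, a factorization your sketch relies on but which is not available for negative fractional powers against a Nemytskii multiplier; the paper works with $\|\mathcal D_s^yX^h(t)\|$ and $\|A^{-\gamma}\mathcal D_s^yX^h(t)\|$ directly and places all negative powers on $y$ alone.
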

\begin{proof}
Similar to the well-posedness of Eq. \eqref{sge}, we have that 
for $0\le s \le t\le T$, $y\in U_0$,
\begin{align*}
\mathcal D_s^yX^h(t)
&=S^h(t-s)P^hy+\int_s^tS^h(t-r)P^hDF(X^h(r))\cdot \mathcal D_s^yX^h(r)dr
\end{align*}
satisfies
\begin{align}\label{mal}
d \mathcal D_s^yX^h(t)
&=-A_h\mathcal D_s^yX^h(t)dt
+P^hDF(X^h(t))\cdot \mathcal D_s^yX^h(t)dt,\\\nonumber 
\mathcal D_s^yX^h(s)&=P^hy.
\end{align}
In order to get the estimate of $\|A_h^{\frac {\beta-1} 2}\mathcal D_sX^h(t)\|_{\LL_2^0}$, we first estimate 
$\|A_h^{-\gamma}\mathcal D_sX^h(t) y\|$, $0\le \gamma \le \frac 12$ and define  $\widetilde \eta_s(t,y) =\mathcal D_sX^h(t) y-S^h(t-s)P^hy$. 
Then $\widetilde \eta_s(t,y)$ satisfies the following equation
\begin{align*}
d\widetilde \eta_s(t,y)
&=-A_h\widetilde \eta_s(t,y)dt+P^h(DF(X^h(t))\cdot \widetilde \eta_s(t,y))dt\\
&\quad+P^h(DF(X^h(t))\cdot S^h(t-s)P^hy)dt,\\
\widetilde \eta_s(s,y)&=0,
\end{align*}
 and
 \begin{align*}
 \widetilde \eta_s(t,y)=\int_s^t\widehat V(t,r)P^h(DF(X^h(r))\cdot S^h(r-s)P^hy)dr,
 \end{align*}
where $\widehat V(t,r)z$ solves for $z\in V^h$,
\begin{align*}
d\widehat V(t,r)z=-A_h\widehat V(t,r)zdt+P^h(DF(X^h(t))\widehat V(t,r)z)dt, \quad \widehat V(r,r)z=z.
\end{align*}
The energy estimate, combined with the Gronwall's inequality, yields that for $s\le r\le t$,
\begin{align*}
\|\widehat V(t,r)z\|^2\le C(T)\|z\|^2.
\end{align*}
This implies that 
\begin{align*}
\|\widetilde \eta_s(t,y)\|&\le C(T)\int_s^t\|P^h(DF(X^h(r))\cdot S^h(r-s)P^hy)\|dr\\
&\le C(T,\gamma)\sup_{r\in [0,T]}\Big[1+\|X^h(r)\|_E^{K-1}\Big]\int_s^t (r-s)^{-\gamma}dr\|A^{-\gamma}y\|.
\end{align*}
Combining with the fact that for $ 0\le \gamma\le \frac 12$,
$$
\|S^h(t-s)P^hy\|\le C(T,\gamma)(t-s)^{-\gamma}\|A^{-\gamma}y\|,
$$
we get 
\begin{align*}
\|\mathcal D_s^yX^h(t) \|\le C(T,\gamma)\sup_{r\in [0,T]}\Big[1+\|X^h(r)\|_E^{K-1}\Big](t-s)^{-\gamma}
\|A^{-\gamma}y\|.
\end{align*}
Thus by the mild form of $\mathcal D_sX^h(t) y$ and the equivalence of norms in \eqref{eq-norm}, we obtain for $0\le \gamma \le \frac 12$,
\begin{align*}
\|A^{-\gamma}\mathcal D^y_sX^h(t)\|&\le 
\|A^{-\gamma}S^h(t-s)P^h y\|\\
&\quad+\int_s^t\|S^h(t-r)P^hDF(X^h(r))\cdot \mathcal D_s^yX^h(r)\|dr\\
&\le C(\gamma)\|A_h^{-\gamma}S^h(t-s)P^h y\|\\
&\quad
+C(T,\gamma)\sup_{r\in [0,T]}[1+\|X^h(r)\|^{K-1}_{E}]\int_s^t \| \mathcal D_s^yX^h(r)\|dr\\
&\le C(T,\gamma)\|A^{-\gamma} y\|
\Big(1+\sup_{r\in [0,T]}[1+\|X^h(r)\|^{2K-2}_{E}]
\int_s^t (r-s)^{-\gamma}ds\Big)\\
&\le C(T,\gamma)\Big(1+\sup_{r\in [0,T]}\|X^h(r)\|^{2K-2}_{E}\Big)\|A^{-\gamma} y\|.
\end{align*}
Now, taking $-\gamma=\frac {\beta -1}2$,  $0<\beta\le 1$, $y=Q^{\frac 12}e_i, i\in \N^+$, together with the stability result of $X^h$ in Proposition \ref{pri-fem}, yields that
\begin{align*}
\E\Big[\Big\|A^{\frac {\beta -1}2}\mathcal D_sX^h(t)\Big\|_{\LL_2^0}^2\Big]
&\le C(T,\beta)\sum_{i\in \N^+}\E\Big[\Big(1+\sup_{r\in [0,T]}\|X^h(r)\|^{4(K-1)}_{E}\Big)\|A^{\frac {\beta-1}2}Q^{\frac 12}e_i\|^2\Big]\\
&\le C(T,X_0,\beta)\big(1+(\log(\frac 1h))^{2(K-1)K^2}\big),
\end{align*}
which completes the proof.
\end{proof}

Now, we turn to estimate the weak error of $\Big|\E\Big[\phi(X^{\delta t}(t))-\phi(X^h(t))\Big]\Big|$.

\begin{tm}\label{weak}
Let  Assumptions \ref{as-lap}-\ref{as-dri} hold.  Assume in addition that $|f''(\xi)|
\le L_f(1+|\xi|^{K-2}), \; 2\le K< 5$, then for every test functions $\phi\in \mathcal C_b^2(\HH)$, $T>0$, $\beta \in (0,1]$ and   $\gamma<\beta$,
there exists $C(X_0,T,\beta,\phi)$ such that 
\begin{align*}
\Big|\E\Big[\phi(X^{\delta t}(T))-\phi(X^h(T))\Big]\Big|
\le C(X_0,T,\beta,\phi)\Big(h^{2\gamma}+\delta t(\log(\frac 1h))^{\frac {(3K-2)K^2}2}\Big).
\end{align*}
\end{tm}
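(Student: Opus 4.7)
The plan is to argue through the Kolmogorov equation associated with the regularized problem \eqref{rspde}. Set $u(t,x):=U^{\delta t}(T-t,x)$, so that $u(T,\cdot)=\phi$ and $\partial_t u + \LL^{\delta t} u = 0$. I would apply It\^o's formula to $t\mapsto u(t,X^h(t))$ along the finite element dynamics \eqref{sge} and take expectation to obtain
\begin{align*}
\E\bigl[\phi(X^h(T))\bigr] - U^{\delta t}(T,P^hX_0) &= \E\int_0^T \bigl[(\LL^h-\LL^{\delta t})u\bigr](t,X^h(t))\,dt,
\end{align*}
where $\LL^h v(x) = \<-A_h x + P^h F(x),Dv(x)\> + \tfrac12\text{tr}\bigl[P^hQP^h D^2v(x)\bigr]$ is the generator of \eqref{sge}. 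Comparing with $\E[\phi(X^{\delta t}(T,X_0))]$, the initial-data mismatch $U^{\delta t}(T,P^hX_0)-U^{\delta t}(T,X_0)$ is controlled via \eqref{kol-d1} and $\|(I-P^h)A^{-\gamma}\|_{\LL(\HH)}\le Ch^{2\gamma}$ from \eqref{err-ord}, yielding an $O(h^{2\gamma})$ contribution.

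Next, I would split the generator difference into four pieces,
\begin{align*}
(\LL^h-\LL^{\delta t})u(x)
&= \<(A-A_h)x,Du(x)\> + \<(P^h-I)F(x),Du(x)\>\\
&\quad + \<F(x)-\Psi_{\delta t}(x),Du(x)\> + \tfrac12\text{tr}\bigl[(P^hQP^h-Q)D^2u(x)\bigr],
\end{align*}
and treat each separately at $x=X^h(t)\in V_h$. The linear piece rewrites in duality as $\<(A-A_h)X^h,Du\>=\<\nabla X^h,\nabla(I-P^h)Du\>$ because $\<A_h v^h,w\>=\<\nabla v^h,\nabla P^h w\>$ for $v^h\in V_h$; the Ritz projection combined with \eqref{err-ord}, \eqref{kol-d1}, and the moment bounds in Proposition \ref{pri-fem} contributes $O(h^{2\gamma})$ up to a $t^{-\gamma}$ singularity. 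The nonlinear-projection term $\<F(X^h),(I-P^h)Du\>$ is handled identically, invoking the $L^{2q}$-moments of $F(X^h)$ from Proposition \ref{pri-fem}. The splitting-regularization term is controlled by Lemma \ref{psi}, which gives $\|F(X^h)-\Psi_{\delta t}(X^h)\|\le C\delta t(1+\|X^h\|_E^{2K-1})$; pairing with \eqref{kol-d1} (polynomial growth of order $K-1$) and Proposition \ref{pri-fem} yields the $\delta t\,(\log(1/h))^{(3K-2)K^2/2}$ contribution, the exponent $(3K-2)=(2K-1)+(K-1)$ accounting for both factors.

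The principal obstacle is the diffusion-trace piece when the noise is rough ($\beta\le\tfrac12$), since a direct application of \eqref{kol-d2} to $\text{tr}[(P^hQP^h-Q)D^2u]$ is not summable. Here I would appeal to the Malliavin integration by parts formula \eqref{int-by}: rewriting the excess trace as $\E\bigl[\bigl\<\int_0^T\Theta^h(s)\,dW(s),\,\Xi(X^h(T))\bigr\>\bigr]$ for a suitable predictable $\Theta^h$ whose $\LL_2^0$-norm carries the $h^{2\gamma}$ factor and some $\Xi\in\mathcal C_b^1(\HH)$, one exchanges a second derivative of $u$ for a first derivative tested against the Malliavin derivative $\mathcal D_s X^h$. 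The weighted estimate $\E\|A_h^{(\beta-1)/2}\mathcal D_s X^h\|_{\LL_2^0}^2 \le C(1+\log(1/h))^{2(K-1)K^2}$ of Proposition \ref{mal-dif} then supplies exactly the regularity needed to close this term. Summing the four contributions and invoking $K<5$, which keeps the polynomial growth of order $(5K-6)\vee(4K-4)$ from \eqref{kol-d2} integrable against $X^h$ by Proposition \ref{pri-fem}, produces the claimed bound. The main technical burden is the careful bookkeeping of the polynomial prefactors arising from Lemma \ref{psi} and the various logarithmic blowups in Propositions \ref{pri-fem} and \ref{mal-dif}, so as to merge them consistently into the single $(\log(1/h))^{(3K-2)K^2/2}$ factor.
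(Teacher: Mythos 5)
Your overall framework (the Kolmogorov equation for the regularized problem, It\^o's formula along $X^h$, the splitting of the generator difference, and the initial-data mismatch controlled by \eqref{kol-d1} and \eqref{err-ord}) coincides with the paper's, and your treatment of the drift pieces $\<(P^h-I)F,Du\>$ and $\<F-\Psi_{\delta t},Du\>$ matches the paper's term $e^2(T)$. The genuine gap is in where you locate the difficulty and hence where you deploy the Malliavin integration by parts. The problematic term is not the diffusion-trace term but the linear term $\<(A-A_h)X^h,Du\>$. Your proposed bound for it, via the identity $\<\nabla X^h,\nabla(I-P^h)Du\>$ together with \eqref{err-ord} and \eqref{kol-d1}, cannot produce $O(h^{2\gamma})$: it requires a factor $\|\nabla X^h(t)\|$, and since $X^h$ (through the discrete stochastic convolution $Z^h$) has spatial regularity only $\HH^{\beta}$ with $\beta\le 1$, one has $\E\big[\|\nabla Z^h(t)\|^2\big]\sim h^{2\beta-2}$, while $\|A^{\frac 12}(I-P^h)A^{-\alpha}\|_{\LL(\HH)}\le Ch^{2\alpha-1}$ must spend a full power of $h$ to reach the gradient. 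The product is at best of order $h^{\beta}$, i.e.\ the strong rate, not the weak rate. The paper circumvents this precisely by writing $\<(A-A_h)X^h,DU^{\delta t}\>=\<X^h,A_hP^h(R^h-I)DU^{\delta t}\>$, inserting the mild form of $X^h$, and applying the integration by parts formula \eqref{int-by} only to the stochastic-convolution contribution $e^{1,3}(T)=\big|\E\int_0^T\<\int_0^tS^h(t-s)P^hdW(s),A_hP^h(R^h-I)DU^{\delta t}(T-t,X^h(t))\>dt\big|$; this trades the rough factor $Z^h$ for $D^2U^{\delta t}$ tested against $\mathcal D_sX^h$, which is exactly what Proposition \ref{mal-dif} and \eqref{kol-d2} are designed to control.

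Conversely, the trace term, which you single out as the principal obstacle, is handled in the paper by a direct argument: splitting $\text{tr}[(Q-P^hQP^h)D^2U^{\delta t}]$ into $\text{tr}[Q(I-P^h)D^2U^{\delta t}]$ and $\text{tr}[(I-P^h)QP^hD^2U^{\delta t}]$, and inserting the trace-class factor $A^{\frac{\beta-1}2}QA^{\frac{\beta-1}2}$ together with $\|A^{\frac{1-\beta}2}(I-P^h)A^{-\frac{1+\beta}2+\epsilon}\|_{\LL(\HH)}\le Ch^{2\beta-2\epsilon}$ and \eqref{kol-d2} already yields $O(h^{2\beta-2\epsilon})$ with the integrable singularity $(T-t)^{-1+\epsilon}$. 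No Malliavin calculus is needed there, and your sketch of rewriting the excess trace as $\E\big[\<\int_0^T\Theta^h(s)dW(s),\Xi(X^h(T))\>\big]$ is not a manipulation that obviously makes sense for a trace of $D^2u$ against a deterministic operator. In short, the Malliavin step must be relocated from the diffusion term to the stochastic-convolution part of the $(A-A_h)$ term; without that relocation the claimed rate $h^{2\gamma}$ does not follow from your argument.
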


\begin{proof}
Based on the property $\E\Big[\phi(X^h(T))\Big]=\E\Big[U^{\delta t}(0,X^h(T))\Big]$,
we split the weak error as
\begin{align*}
\Big|\E\Big[\phi(X^{\delta t}(T))-\phi(X^h(T))\Big]\Big|
&\leq\Big|\E\Big[\phi(X^{\delta t}(T))\Big]-\E\Big[U^{\delta t}(T,X^h(0))\Big]\Big|\\
&\quad+\Big|\E\Big[U^{\delta t}(T,X^h(0))\Big]-\E\Big[\phi(X^h(T))\Big]\Big|.
\end{align*}
By the regularity of $DU^{\delta t}(t,x)$ \eqref{kol-d1} in Proposition \ref{reg-kol}, we bound the first error as  for $0\le \alpha<1$,
\begin{align*}
&\Big|\E\Big[\phi(X^{\delta t}(T))\Big]-\E\Big[U^{\delta t}(T,X^h(0))\Big]\Big|\\
&=
\Big|U^{\delta t}(T,X(0))-U^{\delta t}(T,X^h(0))\Big|\\
&= C(T,\delta t_0) \Big|\int_0^1DU^{\delta t}(T,\theta X(0)+(1-\theta)X^h(0)) d\theta \cdot ((I-P^h)X(0))\Big|\\
&\le C(T,\delta t_0,\alpha,\phi)T^{-\alpha}  \E\Big[\Big(1+\|X_0\|_E^{K-1}+\|X^h(0)\|_E^{K-1}\Big) \Big]\|(-A)^{-\alpha}(I-P^h)X(0)\|\\
&\le C(T,\delta t_0,\alpha,\phi,X_0)T^{-\alpha}h^{2\alpha},
\end{align*}
where we use the fact $\|A^{-\alpha}(I-P^h)\|_{\LL(\HH)}=\|A^{-\alpha}(I-P^h))^*\|_{\LL(\HH)}=\|(I-P^h)A^{-\alpha}\|_{\LL(\HH)}$ and 
the estimation \eqref{err-ord}.

Next, we aim to estimate the left error $\Big|\E\Big[U^{\delta t}(T,X^h(0))\Big]-\E\Big[\phi(X^h(T))\Big]\Big|$.
We recall the Markov generator $\mathcal L^h$ of $X^h$,
\begin{align*}
(\mathcal L^h U)(x)&=\<-A_hx+P^hF(x),DU(x)\>
+\frac 12 \text{tr}[P^hQP^hD^2U(x)],
\end{align*}
where $U\in \mathcal C^2(\HH,\R), x\in V_h.$
Then It\^o formula and corresponding Kolmogorov equation  \eqref{kol} yield that 
\begin{align*}
&\E\Big[U^{\delta t}(T,X^h(0))\Big]-\E\Big[\phi(X^h(T))\Big]\\
&= \E\Big[U^{\delta t}(T,X^h(0))-U^{\delta t}(0,X^h(T))\Big]\\
&=-\E\Big[\int_0^T\Big(-\dot U^{\delta t}(T-t,X^h(t))+\mathcal L^hU^{\delta t}(T-t, X^h(t))\Big)dt\Big]\\
&=\E\Big[\int_0^T\Big(\big(\mathcal L^{\delta t}-\mathcal L^h\big)U^{\delta t}(T-t, X^h(t))\Big)dt\Big].
\end{align*}
Based on the expressions of $\mathcal L^{\delta t}$ and
$\mathcal L^h$, we obtain 
\begin{align*}
&\Big|\E\Big[U^{\delta t}(T,X^h(0))\Big]-\E\Big[\phi(X^h(T))\Big]\Big|\\
&\le \Big|\E\Big[ \int_0^T\Big\< (A-A_h)X^h(t), DU^{\delta t}(T-t,X^h(t)) \Big\>dt  \Big]\Big|\\
&\quad+ \Big|\E\Big[\int_0^T\Big\<\Psi_{\delta t}(X^h(t))-P^hF(X^h(t)), DU^{\delta t}(T-t,X^h(t))\Big\>dt   \Big]\Big|\\
&\quad+ \frac 12\Big|\E\Big[ \int_0^T\text{tr}\Big\{QD^2U^{\delta t}(T-t,X^h(t))-P^hQP^hD^2U^{\delta t}(T-t,X^h(t))\Big\}dt  \Big]\Big|\\
&=: e^1(T)+e^2(T)+e^3(T).
\end{align*}
The relation $R^h=A_h^{-1}P^hA$ implies that
\begin{align*}
&\<(A-A_h)X^h(t),DU^{\delta t}(T-t,X^h(t))\>\\
&=\<(AP^h-P^hA_h)X^h(t),DU^{\delta t}(T-t,X^h(t))\>\\
&=\<X^h,(P^hA-A_hP^h)DU^{\delta t}(T-t,X^h(t))\>\\
&=\<X^h,A_hP^h(A_h^{-1}P^hA-I)DU^{\delta t}(T-t,X^h(t))\>\\
&=\<X^h,A_hP^h(R^h-I)DU^{\delta t}(T-t,X^h(t))\>.
\end{align*}
The above equality and the mild form of $X^h$ lead to
\begin{align*}
&e^1(T)\\
&\le \Big|\E\Big[ \int_0^T\Big\< S^h(t)X^h(0), A_hP^h(R^h-I)DU^{\delta t}(T-t,X^h(t)) \Big\>dt  \Big]\Big|\\
&+\Big|\E\Big[ \int_0^T\Big\< \int_0^tS^h(t-s)P^hF(X^h(s))ds, A_hP^h(R^h-I)DU^{\delta t}(T-t,X^h(t)) \Big\>dt  \Big]\Big|\\
&+\Big|\E\Big[ \int_0^T\Big\< \int_0^tS^h(t-s)P^hdW(s), A_hP^h(R^h-I)DU^{\delta t}(T-t,X^h(t)) \Big\>dt  \Big]\Big|\\
&=:e^{1,1}(T)+e^{1,2}(T)+e^{1,3}(T)
\end{align*}
Applying the regularity estimate of $DU^{\delta t}$ \eqref{kol-d1}, 
the smoothing property of $S^h$ \eqref{smo-ah} and the stability of $X^h$ in Proposition \eqref{pri-fem}, it follows that for small $\epsilon >0$, $\epsilon<\alpha<1$,
\begin{align*}
e^{1,1}(T)
&=
\Big|\E\Big[ \int_0^T\Big\< A_h^{1-\epsilon}S^h(t)X^h(0), A_h^{\epsilon}P^h(R^h-I)A^{-\alpha}
\\
&\qquad \quad A^{\alpha }DU^{\delta t}(T-t,X^h(t)) \Big\>dt  \Big]\Big|\\
&\le 
\E\Big[\int_0^T\Big\|A_h^{1-\epsilon}S^h(t)X^h(0)\Big\| 
\Big\|A_h^{\epsilon}P^h(R^h-I)A^{-\alpha}\Big\|_{\LL(\HH)}\\
&\qquad\quad \Big\|A^{\alpha }DU^{\delta t}(T-t,X^h(t))\Big\| dt\Big]\\
&\le C(T,\epsilon)h^{2\alpha-2\epsilon}\int_0^T
t^{-1+\epsilon}\|X_0\|\E\Big[\Big\|A^{\alpha }DU^{\delta t}(T-t,X^h(t))\Big\|\Big]dt\\
&\le  C(T,\epsilon,\alpha,\phi)h^{2\alpha-2\epsilon}\int_0^T
t^{-1+\epsilon}(T-t)^{-\alpha}\|X_0\|\sup_{t\in[0,T]}\E\Big[1+\Big\|X^h(t)\Big\|_E^{K-1}\Big]dt.
\end{align*}
Similar arguments yield that 
\begin{align*}
e^{1,2}(T)
&=\Big|\E\Big[ \int_0^T\Big\< \int_0^t A_h^{1-\epsilon}S^h(t-s)P^hF(X^h(s))ds, A_h^{\epsilon }P^h(R^h-I)A^{-\alpha }\\
&\qquad\quad A^{\alpha }
DU^{\delta t}(T-t,X^h(t)) \Big\>dt  \Big]\Big|\\
&\le \E\Big[ \int_0^T\int_0^t \Big\|A_h^{1-\epsilon}S^h(t-s)P^hF(X^h(s))\Big\|ds\\
&\quad \qquad\Big\|A_h^{\epsilon }P^h(R^h-I)A^{-\alpha } \Big\|_{\LL(\HH)} \Big\|A^{\alpha}
DU^{\delta t}(T-t,X^h(t))\Big\|dt
 \Big]\\
 &\le C(T,\epsilon,\alpha)h^{2\alpha-2\epsilon} \sup_{t\in [0,T]}\sqrt{\E\Big[\|F(X^h(t))\|^2\Big]}\\
 &\quad\int_0^T\int_0^t \sqrt{\E\Big[\|A^{\alpha}DU^{\delta t}(T-t,X^h(t))\|^2\Big]}(t-s)^{-1+\epsilon}dsdt
\\
 &\le C(T,\epsilon,\alpha,\phi)h^{2\alpha-2\epsilon}
 \sup_{t\in [0,T]}\sqrt{\E\Big[1+\|X^h(t)\|_{E}^{2K}\Big]}
\\
 &\quad \sup_{t\in [0,T]}\sqrt{\E\Big[1+\|X^h(t))\|_{E}^{2K-2}\Big]}\int_0^T\int_0^t (T-t)^{-\alpha}(t-s)^{-1+\epsilon}dsdt.
\end{align*}
To deal with $e^{1,3}(T)$, we make use of  the integration by parts formula in Malliavin sense \eqref{int-by} and the chain rule  to get 
\begin{align*}
e^{1,3}(T)&=\Big|\E\Big[ \int_0^T\Big\< \int_0^tS^h(t-s)P^hdW(s), A_hP^h(R^h-I)DU^{\delta t}(T-t,X^h(t)) \Big\>dt  \Big]\Big|
\\
&=\Big|\E\Big[ \int_0^T \int_0^t\Big\<S^h(t-s)P^h, \\
&\qquad \qquad A_hP^h(R^h-I)D^2U^{\delta t}(T-t,X^h(t)) \mathcal D_sX^h(t)\Big\>_{\LL_2^0}dsdt  \Big]\Big|.
\end{align*}
Then by the property of Hilbert--Schmidt operator and
Cauchy--Schwarz inequality, we have 
\begin{align*}
&e^{1,3}(T)\\
&=\Big|\E\Big[ \int_0^T \int_0^t\Big\<A_h^{\frac {1+\beta } 2-\epsilon}S^h(t-s)A_h^{\frac {1-\beta} 2 }A_h^{\frac {\beta -1}2}P^h, \\
&\quad \quad A_h^{\frac {1-\beta}2+\epsilon}P^h(R^h-I)A^{-\frac {1+\beta }2+\epsilon} A^{\frac {1+\beta}2-\epsilon} D^2U^{\delta t}(T-t,X^h(t)) \mathcal D_sX^h(t)\Big\>_{\LL_2^0}dsdt  \Big]\Big|\\
&\le\E\Big[ \int_0^T \int_0^t \Big\|A_h^{1-\epsilon}S^h(t-s)P^h\Big\|_{\LL(\HH)}\|A_h^{\frac {\beta -1}2}P^h\|_{\LL_2^0}\Big\|A_h^{\frac {1-\beta}{2}+\epsilon}P^h(R^h-I)\\
&\quad\quad
A^{-\frac {\beta+1}2+\epsilon} \Big\|_{\LL(\HH)}\Big\|A^{\frac {1+\beta}2-\epsilon} D^2U^{\delta t}(T-t,X^h(t))A^{\frac {1-\beta}2}\Big\|_{\LL(\HH)}
\Big\|A^{\frac {\beta-1}2}\mathcal D_sX^h(t)\Big\|_{\LL_2^0}dsdt\Big].
\end{align*}
Combining the regularity result of $DU^{\delta t}$  \eqref{kol-d1} and $D^2U^{\delta t}$  \eqref{kol-d2}, 
the smoothing property of $S^h$, $\|A_h^{\frac {\beta -1}2}P^h\|_{\LL_2^0}\le C\|A^{\frac {\beta -1}2}\|_{\LL_2^0}$ and the stability of $X^h$ together, we obtain 
\begin{align*}
e^{1,3}(T)&\le C(T,\epsilon)h^{2\beta-4\epsilon}\int_0^T \int_0^t (t-s)^{-1+\epsilon}\sqrt{\E\Big[\Big\|A^{\frac {\beta-1}2}\mathcal D_sX^h(t)\Big\|_{\LL_2^0}^2\Big]}\\
&\qquad\qquad\qquad\qquad\qquad
\sqrt{\E\Big[\Big\|A^{\frac {1+\beta}2-\epsilon} D^2U^{\delta t}(T-t,X^h(t))A^{\frac {1-\beta}2}\Big\|_{\LL(\HH)}
^2\Big]} 
dsdt\\
&\le C(T,\epsilon)h^{2\beta-4\epsilon}\int_0^T \int_0^t (t-s)^{-1+\epsilon}
(T-t)^{-1+\epsilon}\sqrt{\E\Big[\Big\|A^{\frac {\beta-1}2}\mathcal D_sX^h(t)\Big\|_{\LL_2^0}^2\Big]}
\\
&\qquad\qquad\qquad\qquad\qquad
\sup_{t\in [0,T]}\sqrt{\E\Big[1+\|X^h(t)\|_{E}^{10K-12}\Big]} 
dsdt.
\end{align*} 
Proposition  \ref{mal-dif} yields that 
\begin{align*}
e^{1,3}(T)&\le 
C(X_0,T,\epsilon,\beta)h^{2\beta-4\epsilon}\Big(1+\log(\frac 1h)\Big)^{\frac {(7K-8)K^2}2}.
\end{align*}
Thus we conclude  that $e^1(T)\le C(X_0,T,\epsilon,\beta)(1+T^{-\beta }+(\log(\frac 1h))^{\frac {(7K-8)K^2}2})h^{2\beta-4\epsilon}.$

Next, we turn to focus on $e^{2}(T)$.
From $\Psi_0=F$, it follows that
\begin{align*}
e^{2}(T)&\le \Big|\E\Big[\int_0^T\Big\<\Psi_{\delta t}(X^h(t))-\Psi_0(X^h(t)), DU^{\delta t}(T-t,X^h(t))\Big\>dt   \Big]\Big|\\
&\quad+\Big|\E\Big[\int_0^T\Big\<(I-P^h)F(X^h(t)), DU^{\delta t}(T-t,X^h(t))\Big\>dt   \Big]\Big|\\
&=:e^{2,1}(T)+e^{2,2}(T).
\end{align*}
By the continuity of $\Psi_t$ with respect to $t$ in Lemma 
\ref{psi} and
the regularity of $DU^{\delta t}$\eqref{kol-d1}, it leads to
\begin{align*}
e^{2,1}(T)
&\le\E\Big[\int_0^T \Big|\Big\<\Psi_{\delta t}(X^h(t))-\Psi_0(X^h(t)), DU^{\delta t}(T-t,X^h(t))\Big\>\Big|dt   \Big]\\
&\le\E\Big[\int_0^T\Big\|DU^{\delta t}(T-t,X^h(t))\Big\|\Big\|\Psi_{\delta t}(X^h(t))-\Psi_0(X^h(t))\Big\|dt\Big]\\
&\le C(T,\delta t_0)
\delta t\Big(1+\sup_{t\in [0,T]}\|X^h(t)\|^{3K-2}_{E}\Big).
\end{align*}
The regularity of $DU^{\delta t}$ \eqref{kol-d1}, the estimate \eqref{err-ord}, the growth of $F$
and the stability of $X^h$ yield that
\begin{align*}
e^{2,2}(T)&\le \E\Big[\int_0^T\Big\|(I-P^h)A^{-1 +\epsilon }\Big\|_{\LL(\HH)} \Big\|A^{1-\epsilon}DU^{\delta t}(T-t,X^h(t))\Big\|_{\LL(\HH)}
\\
&\qquad\qquad\|F(X^h(t))\|dt   \Big]\\
&\le C(T,\epsilon)h^{2-2\epsilon} \int_0^T(T-t)^{-1+\epsilon}\sqrt{\E\Big[\|F(X^h(t))\|^2\Big]}\sqrt{\E\Big[1+\|X^h(t)\|_E^{2K-2}\Big]}dt\\
&\le C(T,\epsilon)h^{2-2\epsilon}\sup_{t\in[0,T]} \E\Big[1+\|X^h(t)\|_E^{2K-1}\Big].
\end{align*}
Summing up the estimations of $e^{2,1}$ and $e^{2,2}$, we deduce that 
\begin{align*}
e^2(T)\le C(X_0,T,\epsilon)\Big(h^{2-2\epsilon}(1+(\log(\frac 1h))^{\frac {(2K-1)K^2}2})+\delta t(1+(\log(\frac 1h))^{\frac {(3K-2)K^2}2})\Big).
\end{align*}

For the last term $e^3(T)$, we have 
\begin{align*}
e^3(T)&=\frac 12\Big|\E\Big[ \int_0^T\text{tr}\Big\{\big(IQ(I-P^h)+(I-P^h)QP^h\big)D^2U^{\delta t}(T-t,X^h(t))\Big\}dt  \Big]\Big|\\
&\le \frac 12\Big|\E\Big[ \int_0^T\text{tr}\Big\{IQ(I-P^h)D^2U^{\delta t}(T-t,X^h(t))\Big\}dt  \Big]\Big|\\
&\quad+\frac 12\Big|\E\Big[ \int_0^T\text{tr}\Big\{(I-P^h)QP^hD^2U^{\delta t}(T-t,X^h(t))\Big\}dt  \Big]\Big|\\
&=:e^{3,1}(T)+e^{3,2}(T).
\end{align*}
The properties of trace and  Hilbert--Schmidt operator lead to
\begin{align*}
e^{3,1}(T)&=
\frac 12\Big|\E\Big[ \int_0^T\text{tr}\Big\{IQ(I-P^h)D^2U^{\delta t}(T-t,X^h(t))A^{\frac {1-\beta}2} A^{\frac {\beta-1}2}\Big\}dt  \Big]\Big|\\
&=\frac 12\Big|\E\Big[ \int_0^T\text{tr}\Big\{A^{\frac {\beta-1}2}Q(I-P^h)D^2U^{\delta t}(T-t,X^h(t))A^{\frac {1-\beta}2} \Big\}dt  \Big]\Big|\\
&=\frac 12\Big|\E\Big[ \int_0^T\text{tr}\Big\{A^{\frac {\beta -1}2}QA^{\frac {\beta -1}2}A^{\frac {1-\beta}2}(I-P^h)A^{-\frac {1+\beta}2+\epsilon} \\
&\qquad\qquad A^{\frac {1+\beta}2-\epsilon}D^2U^{\delta t}(T-t,X^h(t))A^{\frac {1-\beta}2} \Big\}dt  \Big]\Big|\\
&\le \frac 12\int_0^T\E\Big[ \|A^{\frac {\beta-1}2}\|_{\LL_2^0}^2\Big\|A^{\frac {1-\beta}2}(I-P^h)A^{-\frac {1+\beta}2+\epsilon}\Big\|_{\LL(\HH)} \\
&\qquad\qquad \Big\|A^{\frac {1+\beta}2-\epsilon}D^2U^{\delta t}(T-t,X^h(t))A^{\frac {1-\beta}2} \Big\|_{\LL(\HH)} \Big]dt.
\end{align*}
Then the regularity of $D^2U^{\delta t}$ \eqref{kol-d2}, the estimate \eqref{err-ord}  and Proposition \ref{pri-fem} yield that 
\begin{align*}
e^{3,1}(T)&\le C(T,\beta,\epsilon)\|A^{\frac {\beta-1}2}\|_{\LL_2^0}^2h^{2\beta-2\epsilon} \int_0^T(T-t)^{-1+\epsilon}\sup_{t\in [0,T]}\E\Big[1+\|X^h(t)\|_E^{5K-6}\Big]dt\\
&\le C(T,X_0,\beta,\epsilon)h^{2\beta-2\epsilon}\Big(1+(\log(\frac 1h))^{\frac {(5K-6)K^2} 2}\Big).
\end{align*}
Similarly, we have  
\begin{align*}
e^{3,2}(T)&=\frac 12\Big|\E\Big[ \int_0^T\text{tr}\Big\{A^{-\frac {\beta+1}2+\epsilon } 
(I-P^h)A^{\frac {1-\beta}2 }A^{\frac {\beta-1}2} QA^{\frac {\beta -1}2}\\
&\qquad A^{\frac {1-\beta}2}P^hA^{\frac {\beta-1}2}A^{\frac {1-\beta}2}D^2U^{\delta t}(T-t,X^h(t))A^{\frac {1+\beta }2-\epsilon}\Big\}dt  \Big]\Big|\\
&\le C\E\Big[ \int_0^T\|A^{\frac {\beta-1}2}\|_{\LL_2^0}^2 \Big\| A^{\frac {1-\beta}2}
(I-P^h)A^{-\frac {1+\beta}2+\epsilon }\Big\|_{\LL(\HH)}
\Big\|A^{\frac {1-\beta}2}P^h\\
&\qquad A^{\frac {\beta-1}2}\Big\|_{\LL(\HH)}\Big\|A^{\frac {1-\beta}2}D^2U^{\delta t}(T-t,X^h(t))A^{\frac {1+\beta }2-\epsilon}\Big\|_{\LL(\HH)}dt  \Big]\\
&\le C(T,X_0,\beta,\epsilon)h^{2\beta-2\epsilon}
\int_0^T\Big\|A^{\frac {1-\beta}2 }P^hA^{\frac {\beta-1}2}\Big\|_{\LL(\HH)}(T-t)^{-1+\epsilon}dt\\
&\le C(T,X_0,\beta,\epsilon)h^{2\beta-2\epsilon}\Big(1+(\log(\frac 1h))^{\frac {(5K-6)K^2}2}\Big),
\end{align*}
where we use the property $\Big\|A^{\frac {\beta-1}2}P^hA^{\frac {1-\beta}2}\Big\|_{\LL(\HH)}\le C$ proven by the equivalence of norms \eqref{eq-norm},
\begin{align*}
\Big\|A^{\frac {\beta-1}2 }P^hA^{\frac {1-\beta}2}\Big\|_{\LL(\HH)}
&\le C\Big\|A_h^{\frac {\beta-1}2 }P^hA^{\frac {1-\beta}2}\Big\|_{\LL(\HH)}\le  C\Big\|A^{\frac {\beta-1}2}A^{\frac {1-\beta}2}\Big\|_{\LL(\HH)}\le C.
\end{align*}
The estimations of $e^{3,1}(T)$ and $e^{3,2}(T)$ indicate 
\begin{align*}
e^3(T)\le C(T,X_0,\beta,\epsilon)h^{2\beta-2\epsilon}\Big(1+(\log(\frac 1h))^{\frac {(5K-6)K^2}2}\Big).
\end{align*}
Summing up all the estimations of $\Big|\E\Big[\phi(X^{\delta t}(T))\Big]-\E\Big[U^{\delta t}(T,X^h(0))\Big]\Big|$, $e^1(T)$, $e^2(T)$ and $e^3(T)$, we obtain that for any small $\epsilon_1>\epsilon$,
\begin{align*}
&\Big|\E\Big[\phi(X^{\delta t}(T))-\phi(X^h(T))\Big]\Big|\\
&\le  C(X_0,T,\epsilon,\beta)h^{2\beta-4\epsilon}\big(1+T^{-\beta}+(\log(\frac 1h))^{\frac {(7K-8)K^2}2}\big)\\
&\quad+C(X_0,T,\epsilon)\delta t\big(1+(\log(\frac 1h))^{\frac {(3K-2)K^2}2}\big)\\
&\le C(X_0,T,\beta,\epsilon)\Big(h^{2\beta-4\epsilon_1}+\delta t(\log(\frac 1h))^{\frac {(3K-2)K^2}2}\Big),
\end{align*}
which, combined with a standard argument, finishes
the proof. 
\end{proof}

Combining Lemma \ref{spl} with Theorem \ref{weak}, we deduce the essentially sharp weak convergence rate
of the finite element method approximating Eq. \eqref{spde}. The essentially sharp weak convergence rate 
is in the sense that the weak convergence rate is essentially  twice the strong convergence rate.
We remark that even if the  logarithmic factor can be eliminated when $\beta >\frac 12$ or $Q=I$, the weak convergence rate can not be improved (see e.g. \cite[Theorem 1.1]{AL16}).

\begin{tm}
Let $T>0$.
Under the assumptions of Theorem \ref{weak}, 
for $\phi\in\mathcal C_b^2(\HH)$, $\beta\in[0,1)$, $\gamma <\beta$,
there exists  $C(X_0,T,\beta,\phi)>0$
such that
\begin{align*}
\Big|\E\Big[\phi(X(T))-\phi(X^h(T))\Big]\Big|
\le C(X_0,T,\beta,\phi)h^{2\gamma}.
\end{align*}
\end{tm}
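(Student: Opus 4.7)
The plan is to combine Lemma \ref{spl} and Theorem \ref{weak} via a triangle inequality, treating the splitting parameter $\delta t$ as a free parameter to be tuned to $h$ at the very end. Inserting the regularized solution $X^{\delta t}(T)$ between $X(T)$ and $X^h(T)$ gives
\begin{align*}
\Big|\E\Big[\phi(X(T))-\phi(X^h(T))\Big]\Big|
&\le \Big|\E\Big[\phi(X(T))-\phi(X^{\delta t}(T))\Big]\Big|\\
&\quad+\Big|\E\Big[\phi(X^{\delta t}(T))-\phi(X^h(T))\Big]\Big|,
\end{align*}
which reduces the whole problem to bounding two pieces separately.

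For the first piece, I use that $\phi \in \mathcal C_b^2(\HH) \subset \mathcal C_b^1(\HH)$ is Lipschitz with constant $|\phi|_{\mathcal C_b^1}$, combined with the strong error bound $\|\sup_{t\in[0,T]}\|X^{\delta t}(t)-X(t)\|\|_{L^1(\Omega)}\le C(X_0,T)\delta t$ from Lemma \ref{spl}, to obtain
$$\Big|\E\Big[\phi(X(T))-\phi(X^{\delta t}(T))\Big]\Big|
\le |\phi|_{\mathcal C_b^1}\,\E\Big[\|X(T)-X^{\delta t}(T)\|\Big]
\le C(X_0,T,\phi)\,\delta t.$$
For the second piece, I directly invoke Theorem \ref{weak}: for any $\widetilde\gamma < \beta$,
$$\Big|\E\Big[\phi(X^{\delta t}(T))-\phi(X^h(T))\Big]\Big|
\le C(X_0,T,\beta,\phi)\Big(h^{2\widetilde\gamma}+\delta t\,(\log(1/h))^{\frac{(3K-2)K^2}{2}}\Big).$$

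To conclude, I must balance the two contributions in terms of $h$ and exploit the fact that $\gamma < \beta$ is \emph{strict}. Given such $\gamma$, pick an intermediate $\widetilde\gamma \in (\gamma,\beta)$ and set $\delta t := h^{2\widetilde\gamma}$. The first piece is then $\le C\, h^{2\widetilde\gamma}$, and the second piece is $\le C\,h^{2\widetilde\gamma}\bigl(1+(\log(1/h))^{(3K-2)K^2/2}\bigr)$. Since $\widetilde\gamma - \gamma > 0$, the factor $h^{2(\widetilde\gamma - \gamma)}$ absorbs the logarithmic correction for all sufficiently small $h$, and both contributions are majorized by $C(X_0,T,\beta,\phi)\,h^{2\gamma}$. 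The constant may be taken uniform in $h\in(0,1]$ by adjusting it to handle the remaining finite range.

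There is essentially no obstacle in this argument: the two hard ingredients have already been established in Lemma \ref{spl} and Theorem \ref{weak}. The only point that requires any attention is the parameter choice $\delta t = h^{2\widetilde\gamma}$ together with the use of the strict inequality $\gamma < \beta$ to swallow the logarithmic loss $(\log(1/h))^{(3K-2)K^2/2}$; this is precisely the "standard argument" alluded to at the end of the proof of Theorem \ref{weak}.
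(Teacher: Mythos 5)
Your proposal is correct and follows essentially the same route as the paper: triangle inequality through $X^{\delta t}(T)$, Lemma \ref{spl} for the splitting error, Theorem \ref{weak} for the remaining weak error, and a choice of $\delta t$ as a power of $h$ so that the strict gap $\gamma<\beta$ absorbs the logarithmic factor. Your explicit choice $\delta t=h^{2\widetilde\gamma}$ with $\widetilde\gamma\in(\gamma,\beta)$ merely spells out the ``standard argument'' the paper leaves implicit (the paper takes $\delta t=\mathcal O(h^{2\beta})$, which works the same way).
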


\begin{proof}
By the triangle inequality, Lemma \ref{spl} and Theorem \ref{weak} and taking $\delta t=\mathcal O(h^{2\beta})$, we have 
\begin{align*}
\Big|\E\Big[\phi(X(T))-\phi(X^h(T))\Big]\Big|
&\le \Big|\E\Big[\phi(X(T))-\phi(X^{\delta t}(T))\Big]\Big|\\
&\quad +\Big|\E\Big[\phi(X^{\delta t}(T))-\phi(X^h(T))\Big]\Big|\\
&\le  C(X_0,T,p)\delta t\\
&\quad+C(X_0,T,\beta)\Big(h^{2\gamma}+\delta t(\log(\frac 1h))^{\frac {(3K-2)K^2}2}\Big)\\
&\le
 C(X_0,T,p)h^{2\gamma},
\end{align*}
which completes the proof.
\end{proof}

\section*{Acknowledgments}
The authors would like to thank Charles-Edouard Br\'ehier for his comments based on carefully reading the manuscript.

\bibliographystyle{amsplain}
\bibliography{bib}

\providecommand{\bysame}{\leavevmode\hbox to3em{\hrulefill}\thinspace}
\providecommand{\MR}{\relax\ifhmode\unskip\space\fi MR }
\providecommand{\MRhref}[2]{%
  \href{http://www.ams.org/mathscinet-getitem?mr=#1}{#2}
}
\providecommand{\href}[2]{#2}
\begin{thebibliography}{10}

\bibitem{AKL16}
A.~Andersson, R.~Kruse, and S.~Larsson, \emph{Duality in refined
  {S}obolev-{M}alliavin spaces and weak approximation of {SPDE}}, Stoch.
  Partial Differ. Equ. Anal. Comput. \textbf{4} (2016), no.~1, 113--149.
  \MR{3274889}

\bibitem{AL16}
A.~Andersson and S.~Larsson, \emph{Weak convergence for a spatial approximation
  of the nonlinear stochastic heat equation}, Math. Comp. \textbf{85} (2016),
  no.~299, 1335--1358. \MR{3454367}

\bibitem{AC17}
R.~Anton, D.~Cohen, and L.~Quer-Sardanyons, \emph{A fully discrete
  approximation of the one-dimensional stochastic heat equation}, IMA Journal
  of Numerical Analysis, dry060, https://doi.org/10.1093/imanum/dry060 (2018).

\bibitem{BGJK17}
S.~Becker, B.~Gess, A.~Jentzen, and P.~E. Kloeden, \emph{Strong convergence
  rates for explicit space-time discrete numerical approximations of stochastic
  {A}llen--{C}ahn equations}, arXiv:1711.02423 (2017).

\bibitem{BJ16}
S.~Becker and A.~Jentzen, \emph{Strong convergence rates for
  nonlinearity-truncated {E}uler-type approximations of stochastic
  {G}inzburg-{L}andau equations}, Stochastic Process. Appl. \textbf{129}
  (2019), no.~1, 28--69. \MR{3906990}

\bibitem{BCH18}
C.~E. Br\'ehier, J.~Cui, and J.~Hong, \emph{Strong convergence rates of
  semi-discrete splitting approximations for stochastic {A}llen--{C}ahn
  equation}, IMA J. Numer. Anal., dry052, https://doi.org/10.1093/imanum/dry052
  (2018).

\bibitem{bd18}
C.~E. Br\'{e}hier and A.~Debussche, \emph{Kolmogorov equations and weak order
  analysis for {SPDE}s with nonlinear diffusion coefficient}, J. Math. Pures
  Appl. (9) \textbf{119} (2018), 193--254. \MR{3862147}

\bibitem{BG18}
C.~E. Br\'ehier and L.~Gouden\`ege, \emph{Analysis of some splitting schemes
  for the stochastic {A}llen--{C}ahn equation}, arXiv:1801.06455, to appear in
  Discrete Contin. Dyn. Syst. Ser. B (2018).

\bibitem{BG18b}
\bysame, \emph{Weak convergence rates of splitting schemes for the stochastic
  {A}llen--{C}ahn equation}, arXiv:1804.04061 (2018).

\bibitem{CJK14}
D.~Conus, A.~Jentzen, and R.~Kurniawan, \emph{Weak convergence rates of
  spectral {G}alerkin approximations for {SPDE}s with nonlinear diffusion
  coefficients}, Ann. Appl. Probab. \textbf{29} (2019), no.~2, 653--716.
  \MR{3910014}

\bibitem{CLT94}
M.~Crouzeix, S.~Larsson, and V.~Thom\'ee, \emph{Resolvent estimates for
  elliptic finite element operators in one dimension}, Math. Comp. \textbf{63}
  (1994), no.~207, 121--140. \MR{1242058}

\bibitem{CH17}
J.~Cui and J.~Hong, \emph{Analysis of a splitting scheme for damped stochastic
  nonlinear {S}chr\"{o}dinger equation with multiplicative noise}, SIAM J.
  Numer. Anal. \textbf{56} (2018), no.~4, 2045--2069. \MR{3826675}

\bibitem{CHL16b}
J.~Cui, J.~Hong, and Z.~Liu, \emph{Strong convergence rate of finite difference
  approximations for stochastic cubic {S}chr\"odinger equations}, J.
  Differential Equations \textbf{263} (2017), no.~7, 3687--3713. \MR{3670034}

\bibitem{CHLZ17}
J.~Cui, J.~Hong, Z.~Liu, and W.~Zhou, \emph{Strong convergence rate of
  splitting schemes for stochastic nonlinear {S}chr\"{o}dinger equations}, J.
  Differential Equations \textbf{266} (2019), no.~9, 5625--5663. \MR{3912762}

\bibitem{CHS18a}
J.~Cui, J.~Hong, and L.~Sun, \emph{Strong convergence rate of a full
  discretization for stochastic {C}ahn--{H}illiard equation driven by
  space-time white noise}, arXiv:1812.06289 (2018).

\bibitem{CHS18}
\bysame, \emph{Weak convergence and invariant measure of a full discretization
  for parabolic {SPDE}s with non-globally lipschitz coefficients},
  arXiv:1811.04075 (2018).

\bibitem{DZ14}
G.~Da~Prato and J.~Zabczyk, \emph{Stochastic equations in infinite dimensions},
  second ed., Encyclopedia of Mathematics and its Applications, vol. 152,
  Cambridge University Press, Cambridge, 2014. \MR{3236753}

\bibitem{Deb11}
A.~Debussche, \emph{Weak approximation of stochastic partial differential
  equations: the nonlinear case}, Math. Comp. \textbf{80} (2011), no.~273,
  89--117. \MR{2728973}

\bibitem{FLZ17}
X.~Feng, Y.~Li, and Y.~Zhang, \emph{Finite element methods for the stochastic
  {A}llen--{C}ahn equation with gradient-type multiplicative noise}, SIAM J.
  Numer. Anal. \textbf{55} (2017), no.~1, 194--216. \MR{3600370}

\bibitem{HJK16}
M.~Hefter, A.~Jentzen, and R.~Kurniawan, \emph{Weak convergence rates for
  numerical approximations of stochastic partial differential equations with
  nonlinear diffusion coefficients in {UMD} banach spaces.}, arXiv:1612.03209
  (2016).

\bibitem{KLL12}
M.~Kov\'{a}cs, S.~Larsson, and F.~Lindgren, \emph{Weak convergence of finite
  element approximations of linear stochastic evolution equations with additive
  noise}, BIT \textbf{52} (2012), no.~1, 85--108. \MR{2891655}

\bibitem{KLL13}
\bysame, \emph{Weak convergence of finite element approximations of linear
  stochastic evolution equations with additive noise {II}. {F}ully discrete
  schemes}, BIT \textbf{53} (2013), no.~2, 497--525. \MR{3123856}

\bibitem{KLL18}
\bysame, \emph{On the discretisation in time of the stochastic {A}llen--{C}ahn
  equation}, Math. Nachr. \textbf{291} (2018), no.~5-6, 966--995. \MR{3795566}

\bibitem{LQ18}
Z.~Liu and Z.~Qiao, \emph{Strong approximation of stochastic {A}llen--{C}ahn
  equation with white noise}, IMA J. Numer. Anal.
  https://doi.org/10.1093/imanum/dry088 (2019).

\bibitem{MP17}
A.~K. Majee and A.~Prohl, \emph{Optimal strong rates of convergence for a
  space-time discretization of the stochastic {A}llen--{C}ahn equation with
  multiplicative noise}, Comput. Methods Appl. Math. \textbf{18} (2018), no.~2,
  297--311. \MR{3776047}

\bibitem{QW18}
R.~Qi and X.~Wang, \emph{Optimal error estimates of {G}alerkin finite element
  methods for stochastic {A}llen--{C}ahn equation with additive noise}, J Sci
  Comput. https://doi.org/10.1007/s10915-019-00973-8 (2019).

\bibitem{Ste56}
E.~M. Stein, \emph{Interpolation of linear operators}, Trans. Amer. Math. Soc.
  \textbf{83} (1956), 482--492. \MR{0082586}

\bibitem{Tho06}
V.~Thom\'ee, \emph{Galerkin finite element methods for parabolic problems},
  second ed., Springer Series in Computational Mathematics, vol.~25,
  Springer-Verlag, Berlin, 2006. \MR{1479170}

\bibitem{TW83}
V.~Thom\'{e}e and L.~B. Wahlbin, \emph{Maximum-norm stability and error
  estimates in {G}alerkin methods for parabolic equations in one space
  variable}, Numer. Math. \textbf{41} (1983), no.~3, 345--371. \MR{712117}

\bibitem{VVW08}
J.~van Neerven, M.~C. Veraar, and L.~Weis, \emph{Stochastic evolution equations
  in {UMD} {B}anach spaces}, J. Funct. Anal. \textbf{255} (2008), no.~4,
  940--993. \MR{2433958}

\bibitem{Wal05}
J.~B. Walsh, \emph{Finite element methods for parabolic stochastic {PDE}'s},
  Potential Anal. \textbf{23} (2005), no.~1, 1--43. \MR{2136207}

\bibitem{Wang16}
X.~Wang, \emph{Weak error estimates of the exponential {E}uler scheme for
  semi-linear {SPDE}s without {M}alliavin calculus}, Discrete Contin. Dyn.
  Syst. \textbf{36} (2016), no.~1, 481--497. \MR{3369232}

\bibitem{Wang18}
\bysame, \emph{An efficient explicit full discrete scheme for strong
  approximation of stochastic {A}llen--{C}ahn equation}, arXiv:1802.09413
  (2018).

\end{thebibliography}
\end{document}